\def\L{{ \mathcal{L}}}
\def\D{{ \mathcal{D}}}
\def\K{{ \mathcal{K}}}
\def\P{{ \mathcal{P}}}
\def\B{{ \mathcal{B}}}
\def\F{{ \mathcal{F}}}
\def\R{{\mathbb{R}}}
\newtheorem{example}{Example}[section]
\begin{document}

\volume{}
\title{Hyper-Differential Sensitivity Analysis of Uncertain Parameters in PDE-Constrained Optimization}
\titlehead{HDSA for PDE-Constrained Optimization}
\authorhead{Joseph Hart, Bart van Bloemen Waanders, and Roland Herzog}

\author[1]{Joseph Hart}
\corrauthor[1]{Bart van Bloemen Waanders}
\corremail{bartv@sandia.gov}

\address[1]{Optimization and Uncertainty Quantification, Sandia National Laboratories, P.O. Box 5800, Albuquerque, NM 87123-1320}
\author[2]{Roland Herzog}
\address[2]{Technical University Chemnitz, Faculty of Mathematics, 09107 Chemnitz, Germany}

\abstract{

Many problems in engineering and sciences require the solution of
large scale optimization constrained by partial differential equations
(PDEs). Though PDE-constrained optimization is itself challenging,
most applications pose additional complexity, namely, uncertain
parameters in the PDEs. Uncertainty quantification (UQ) is necessary
to characterize, prioritize, and study the influence of these
uncertain parameters.  Sensitivity analysis, a classical tool in UQ,
is frequently used to study the sensitivity of a model to uncertain
parameters. In this article, we introduce ``hyper-differential
sensitivity analysis" which considers the sensitivity of the solution
of a PDE-constrained optimization problem to uncertain parameters.
Our approach is a goal-oriented analysis which may be viewed as a tool
to complement other UQ methods in the service of decision making and
robust design. We formally define hyper-differential sensitivity
indices and highlight their relationship to the existing optimization
and sensitivity analysis literatures.  Assuming the presence of low
rank structure in the parameter space, computational efficiency is
achieved by leveraging a generalized singular value decomposition in
conjunction with a randomized solver which converts the computational
bottleneck of the algorithm into an embarrassingly parallel loop.  Two
multi-physics examples, consisting of nonlinear steady state control
and transient linear inversion, demonstrate efficient identification
of the uncertain parameters which have the greatest influence on the
optimal solution.  }

\keywords{sensitivity analysis, PDE-constrained optimization, randomized linear algebra, low rank approximations}

\maketitle

\section{Introduction}

Many critical applications in science and engineering require the
analysis of multi-physics phenomena across several spatial and
temporal scales. For instance in material science, fusion energy,
hydrocarbon extraction, and climate science, the ultimate goal is to
solve large scale optimization problems while reconciling
uncertainties that arise in constituent models, material properties,
boundary conditions, initial conditions, and multi-phyiscs
interfaces.Although the desire is to apply modeling, uncertainty
quantification, and optimization to arrive at robust solutions, the
culmination of such analysis poses a formidable computational
challenge rendering many algorithmic strategies ineffective.

In this article, we propose a sensitivity analysis framework to
determine the importance of uncertain parameters in the context of
optimal solutions. To aid in distinguishing our approach from existing
sensitivity analysis techniques, we introduce the term
``hyper-differential sensitivity analysis" or HDSA for short. We
compute the Fr\'echet derivative of the solution of a PDE-constrained
optimization problem with respect to uncertain parameters. Assuming a
low rank structure in the parameter space, a truncated generalized
singular value decomposition is employed to efficiently estimate
sensitivity indices which enables an identification of inconsequential
uncertain parameters and a prioritization of the uncertainties. By
identifying low dimensional structure in high dimensional parameter
spaces, HDSA is a goal-oriented analysis which considers the
sensitivity of the optimal solution.

The complexities of PDE-constrained optimization consists of
incorporating the discretization of PDEs as well as implementing
complicated linear algebra constructs, including adjoints and
Hessians.  Furthermore, the large scale nature of these problems
requires efficient computational implementation with scalable parallel
linear algebra.  Considerable research and development has been
conducted and the interested reader is referred to a small sampling of
the literature \cite{Vogel_99,
  Archer_01,Haber_01,Vogel_02,Biegler_03,Biros_05,Laird_05,Hintermuller_05,Hazra_06,Biegler_07,Borzi_07,Hinze_09,Biegler_11}. Although
a range of algorithmic strategies are possible, we use standard
methods to solve the underlying PDE-constrained optimization problem,
consisting of Newton-based solvers, trust region globalization, Tikhonov
regularization, finite element discretization of the PDE constraints,
and matrix-free operators. The focus of this paper is sensitivity
analysis with respect to the solution of PDE-constrained problems and
therefore inherits all the associated complexities.

Traditional sensitivity analysis can be divided into two subfields:
local sensitivity analysis and global sensitivity analysis
\cite{intro_SA_uq_handbook,saltellibook}. There are a plurality of
methods within each subfield; we highlight one class of methods from
each to provide background for this article. Local sensitivity
analysis studies the influence of uncertain parameters on a quantity
of interest (for instance, a functional of the PDE solution) at some
fixed parameter value. Computing the derivatives of the quantity of
interest with respect to the parameters is one measure of local
sensitivity. A large derivative indicates that the quantity of
interest is sensitive to the parameter. In the context of
PDE-constrained optimization, finite difference, direct, and
adjoint-based sensitivities are local sensitivities of the objective
function with respect to the optimization variables, whereas HDSA is
the local sensitivity of the optimal solution with respect to
uncertain parameters. The limitation of local sensitivity analysis is
that it is only valid in a neighborhood of the fixed parameter
value. Global sensitivity analysis
\cite{iooss,borgonovo2,kucherenko_derivative,variance_based_uq_handbook,staum}
seeks to alleviate this problem by varying the parameters over a set
(typically with an associated probability measure) and measuring the
importance of the parameters by averaging over this set. The approach
of \cite{kucherenko_derivative} connects local sensitivity analysis to
global sensitivity analysis. Since the derivative is local in the
sense that it depends on the user specifying a nominal parameter
value, a global approach computes the expected value (in parameter
space) of the squared derivative. Both local and global
derivative-based (hyper-differential) sensitivity indices are defined
in this article, though additional complexities arise in the context
of HDSA which do not occur in the traditional sensitivity analysis
framework.

Building on the work of Brandes and Griesse\footnote{R. Herzog is
  formerly R. Griesse} \cite{griesse2} (and related work
\cite{griesse_constraints,Griesse_part_2,Griesse_part_1,Griesse_Thesis,Griesse_SISC,griesse_3d,Griesse_AD}),
we define sensitivity indices, provide algorithmic developments, and
implement software which enables analysis for large-scale optimization
problems with high dimensional uncertain parameter spaces. The
evaluation of these sensitivities requires an eigenvalue computation
involving the optimality system.  To that end, we have reformulated
the eigenvalue problem and introduced the use of a randomized
eigenvalue solver which allows for parallelization of the underlying
matrix-vector products \cite{arvind}.  Our implementation is in C++
with parallel linear algebra constructs.  The parallel randomized
solver extends the use of the native parallelism for a second level of
parallelism. Having developed an efficient computational framework, we
introduce global sensitivity indices in the context of PDE-constrained
optimization, demonstrate how they may be estimated, and highlight the
challenges involved. Finally, we demonstrate our approach on the
control of a thermal-fluid flow multi-physics problem and a source
inversion problem constrained by Darcy flow and advection diffusion.
The main contributions of this paper are: 1) the introduction of local
and global hyper-differential sensitivity indices as tools to analyze
the sensitivity of optimal solutions in the service of robust design
and decision making, 2) the development of C++ software infrastructure
to leverage state of the art (matrix free) PDE-constrained
optimization and parallel linear algebra, and 3) the formulation of a
symmetric generalized eigenvalue problem (to estimate sensitivities)
and its numerical solution via randomized methods to achieve (nearly)
embarrassingly parallel efficiency.

The article is organized as follows. We first define
hyper-differential local and global sensitivities in
Section~\ref{sensitivity_analysis}. Our algorithmic contributions to
accelerate the computation of local sensitivities are given in
Section~\ref{sec:comp_local_sen}, followed by
Section~\ref{sec:algo_overview} which overviews our algorithms,
provides an analysis of computational cost, and a strategy for
interpreting the sensitivities. Our proposed framework is demonstrated
through two applications in Section~\ref{sec:numerical_results}. We
provide conclusions and highlight areas of future work in
Section~\ref{sec:conclusion}.

\section{Hyper-Differential Sensitivity Analysis for Solutions of PDE-Constrained Optimization Problems}
\label{sensitivity_analysis}
This section provides necessary background and formally defines
hyper-differential local and global sensitivities. The implicit
function theorem forms the mathematical foundation for the proposed
approach. The concept of sensitivities, defined through the implicit
function theorem, are present in different portions of the
optimization literature with multiple titles and in various
contexts. For instance, it is called post optimality analysis,
sensitivity analysis, or parametric programming in the operations
research literature \cite{Murthy_OR}, sensitivity analysis, stability
analysis, and perturbation analysis in the nonlinear programming
community \cite{shapiro_SIAM_review}, and parametric sensitivity
analysis in PDE-constrained optimal control
\cite{Griesse_part_2,Griesse_part_1}.  We follow Brandes and
Griesse~\cite{griesse2} who introduced such sensitivity analysis for
PDE-constrained optimization.

\subsection{Local Sensitivity Analysis}
\label{sec:loc_sen}
 Consider the PDE-constrained optimization problem
\begin{align} 
\label{opt_gen}
& \min\limits_{u,z} J(u,z,\theta) \\
\text{s.t.} \ & c(u,z,\theta) = 0 \nonumber \\
& u \in U, z \in Z \nonumber
\end{align}
where $U$ is the state space, $Z$ is the optimization variable space,
$\theta \in \Theta$ are uncertain parameters which are fixed in the
optimization problem, $J:U \times Z \times \Theta \to \R$ is an
objective function, $c:U \times Z \times \Theta \to \Lambda^\star$ is
the weak form of a PDE, and $\Lambda^\star$ denotes the dual of
$\Lambda$. It is assumed that $U$ and $\Lambda$ are reflexive Banach
spaces, and $Z$ and $\Theta$ are Hilbert spaces. The stronger
assumptions on $Z$ and $\Theta$ are to enable subsequent analysis with
the generalized singular value decomposition (GSVD). The spaces
$U,Z,\Theta$, and $\Lambda$ may be finite or infinite dimensional. In
particular, the optimization variable space $Z$ may correspond to
euclidean space or a function space in either a control, design, or
inverse problem setting, and the parameter space $\Theta$ may
represent a finite number of parameters or a functional representation
(for instance, a spatially dependent parameter), in which case
$\Theta$ is infinite dimensional. The PDE represented by $c$ may be
time dependent or in steady state. We assume that $J$ and $c$ are
twice continuously differentiable with respect to $(u,z,\theta)$ and
that the Fr\'echet derivative of $c$ with respect to $u$ is
surjective.

Our goal is to develop a computational framework which may by used for
large scale PDE systems with infinite (or large finite) dimensional
parameter uncertainty. In particular, appealing to the use of HDSA for
transient, nonlinear, and multi-physics systems with spatially and/or
temporally dependent parameters where it may identify important
structure embedded in complex physics.

To analyze \eqref{opt_gen} as an unconstrained problem, define the
Lagrangian $\L:U \times Z \times \Lambda \times \Theta \to \R$ as
\begin{eqnarray}
\label{lagrangian}
\L(u,z,\lambda,\theta) = J(u,z,\theta) + \langle \lambda, c(u,z,\theta) \rangle ,
\end{eqnarray}
where $\lambda \in \Lambda$ is the (unique) Lagrange multiplier, in the context of
PDE-constrained optimization it is called the adjoint state. Throughout the article
the subscripts $J_*$ and $c_*$ are used to denote the Fr\'echet derivative of
$J$ and $c$ with respect to $*$, respectively. After discretizing,
$J_*$ and $c_*$ will denote the derivative of $J$ and Jacobian of $c$,
respectively.

We seek to perform derivative-based analysis, specifically the
derivative of the optimal solution with respect to the
parameters. Care must be taken since \eqref{opt_gen} may admit
multiple local minima. The following result (Lemma 2.6 in
\cite{griesse2}) is foundational for our sensitivity analysis.

Let $(u_0,z_0)$ be a local minimum of \eqref{opt_gen} when
$\theta=\theta_0$ and $\lambda_0$ be the unique adjoint
state. Assuming the second order sufficient optimality condition
holds, see \cite{griesse2} for details, there exist neighborhoods
$\mathcal N(\theta_0) \subset \Theta$ and $\mathcal
N(u_0,z_0,\lambda_0) \subset U \times Z \times \Lambda$ and a
continuously differentiable function
\begin{eqnarray*}
\F:\mathcal N(\theta_0) \to \mathcal N(u_0,z_0,\lambda_0)
\end{eqnarray*}
such that for all $\theta \in \mathcal N(\theta_0)$, $\F(\theta)=(u_{opt}(\theta),z_{opt}(\theta),\lambda_{opt}(\theta))$ is the unique stationary point of $\L(\cdot,\cdot,\cdot,\theta)$ in $\mathcal N(u_0,z_0,\lambda_0)$, that is,
\begin{eqnarray*}
\L_{(u,z,\lambda)}(\F(\theta),\theta)=0.
\end{eqnarray*}
The Fr\'echet derivative of $\F$ at $\theta_0$ is given by
\begin{eqnarray*}
\F'(\theta_0;z_0) = \K(\theta_0;z_0)^{-1} \B(\theta_0;z_0)
\end{eqnarray*}
where
\begin{eqnarray}
\label{K_B_ops} 
\K(\theta_0;z_0) = \left( \begin{array}{ccc}
\L_{u,u} & \L_{u,z} & c_u^\star \\
\L_{z,u} & \L_{z,z} & c_z^\star \\
c_u & c_z & 0 \\
\end{array} \right)
\qquad \text{and} \qquad
\B(\theta_0;z_0) = - \left( \begin{array}{c}
\L_{u,\theta} \\
\L_{z,\theta} \\
\L_{\lambda,\theta} \\
\end{array} \right),
\end{eqnarray}
with the second derivatives of $\L$ evaluated at
$(u_0,z_0,\lambda_0,\theta_0)$; $\star$ denotes the adjoint of an
operator. We only write explicit dependence on $(\theta_0;z_0)$ to
simplify notation. The operator $\K$ is the Karush-Kuhn-Tucker (KKT)
operator.

This gives a computable expression for the change in the optimal
solution when the parameters $\theta_0 \in \Theta$ are perturbed. We
emphasize that this is a local result in the sense that the
optimization problem \eqref{opt_gen} may have many stationary points;
$\F$ is only defined in the neighborhood of a particular stationary
point, $\mathcal N(u_0,z_0,\lambda_0)$, ensuring existence of the
mapping and its derivative.

The directional derivative of optimal solution in the direction
$\theta$ is given by the solution, $(u,z,\lambda)$, of the linear
system
\begin{eqnarray}
\label{KKT_solve}
\K(\theta_0;z_0)
\left( \begin{array}{c}
u\\
z \\
\lambda \\
\end{array} \right)
= \B(\theta_0;z_0) \theta .
\end{eqnarray}

To determine the sensitivity of the optimization variables to changes
in $\theta$, define the projection operator $\P: U \times Z \times
\Lambda \to Z$ by
\begin{eqnarray*}
\P 
 \left( \begin{array}{c}
u \\
z\\
\lambda \\
\end{array} \right)
=
z .
\end{eqnarray*}
Then the Fr\'echet derivative of the optimal solution, $z$, with
respect to the parameters $\theta$ may be expressed as the linear
operator $\D(\theta_0;z_0):\Theta \to Z$,
\begin{eqnarray}
\label{sen_operator}
\D(\theta_0;z_0) = \P \K(\theta_0;z_0)^{-1} \B(\theta_0;z_0) .
\end{eqnarray}
Note that the sensitivity of the state, or a function of the state,
adjoint, or combination of them may be considered by using a different
$\P$. To simplify the presentation, this article focuses on the
sensitivity with respect to the optimization variables $z$.

We introduce the hyper-differential local sensitivity function
$\mathcal S(\theta_0;z_0):\Theta \to \R$ which is defined by
\begin{eqnarray}
\label{sensitivity_function}
\mathcal S(\theta_0;z_0) \phi = \left\vert \left\vert \D(\theta_0;z_0) \frac{\phi}{\vert \vert \phi \vert \vert_\Theta} \right\vert \right\vert_Z \qquad \forall \phi \in \Theta.
\end{eqnarray}
The scalar $\mathcal S(\theta_0;z_0) \phi$ may be interpreted as the
magnitude of the change in the optimal $z$ when the parameters are
perturbed in the direction $\phi$. Throughout the article we will
commonly refer to \eqref{sensitivity_function} as a local sensitivity
for short, but adopt the formal title ``hyper-differential local
sensitivity" to distinguish this approach from other forms of
derivative-based sensitivity analysis.

A traditional sensitivity analysis approach may solve the optimization problem with $\theta=\theta_0$ to determine $z_0$, fix $z=z_0$, 
and analyze the sensitivity of the state (or
objective function) to changes in the parameters; there are no 
KKT solves involved. This suffers two drawbacks:
\begin{enumerate}
\item[$\bullet$] The optimal $z$ will change when the parameters change, so fixing the optimization variable may lead to sensitivity which occurs because of a poor $z$.
\item[$\bullet$] There may be parameters for which the state $u$ is sensitive but the optimal $z$ is not, or vice versa. Since the solution of the optimization problem $z$ is our end goal, fixing the $z$ does not give the needed sensitivities. 
\end{enumerate}

Example~\ref{two_sensitivity_approaches} illustrates the difference
between this traditional sensitivity analysis approach versus
hyper-differential sensitivity analysis. We argue that the latter is
applicable to many engineering and science problems, and while it is
computationally intensive, the final sensitivity results are in the
context of control, design, and inversion goals.

\begin{example}
\label{two_sensitivity_approaches}
\textit{Consider the optimization problem}
\begin{align}
\label{example_opt}
& \min_{u,z} \hspace{3 mm} J(u,z) = (u-2)^2+.0005z^2 \\
& s.t. \hspace{5 mm} u=\frac{1}{1+e^{-\theta_1z}}+\theta_2 \nonumber
\end{align}
\textit{where $\theta=(\theta_1,\theta_2)$ are uncertain parameters.}
 
\textit{We solve \eqref{example_opt} with $\theta=(0.5,0.5)$ to find the optimal solution $z=8.22$. Following the framework described above, define the optimal solution as a function of $\theta$, defined on a neighborhood of $(0.5,0.5)$, as $z_{opt}(\theta) = \P \F(\theta)$. Then}
 
\begin{eqnarray*}
\left\vert \frac{\partial z_{opt} }{\partial \theta_1}(0.5,0.5) \right\vert = 9.99 \qquad \text{and} \qquad \left\vert \frac{\partial z_{opt} }{\partial \theta_2}(0.5,0.5) \right\vert = 3.12.
\end{eqnarray*}

\textit{Alternatively, we may consider the reduced objective function parameterized by $\theta$ and evaluated at $z=8.22$,}
\begin{eqnarray*}
g(\theta_1,\theta_2) = \left(\frac{1}{1+e^{-\theta_1(8.22)}}+\theta_2-2\right)^2 + .0005 (8.22)^2.
\end{eqnarray*}
\textit{Computing the partial derivatives of $g$ with respect to $\theta_1$ and $\theta_2$ gives}
\begin{eqnarray*}
\left\vert \frac{\partial g }{\partial \theta_1}(0.5,0.5) \right\vert = 0.135 \qquad \text{and} \qquad \left\vert \frac{\partial g }{\partial \theta_2}(0.5,0.5) \right\vert = 1.03 .
\end{eqnarray*}
 
\textit{Hence computing the sensitivity of the optimal solution $z_{opt}$ with respect to $\theta$ gives
  a different conclusion than the sensitivity of the objective
  function, evaluated at the optimal solution, with respect to $\theta$. This conclusion highlights the difference between hyper-differential sensitivity analysis and traditional approaches to sensitivity analysis.}
   
\textit{Figure~\ref{fig:logistic_example} gives some intuition for
  this result. The constraint is plotted with $u$ as a function of
  $z$. Curves are plotted for different values of $\theta$ and the
  solution of \eqref{example_opt} for each fixed $\theta$ is given by
  the dot on the curve. In the left (right) panel $\theta_2=0.5$
  ($\theta_1=0.5$) is fixed and $\theta_1$ ($\theta_2$) varies from
  $0.3$ to $0.7$. This demonstrates that as $\theta_1$ varies, left
  panel, the optimal solution $z$ varies significantly while the state
  $u$ is kept nearly constant; whereas as $\theta_2$ varies, right
  panel, $z$ is nearly constant while $u$ varies significantly.}

\begin{figure}
\centering
  \includegraphics[width=0.49\textwidth]{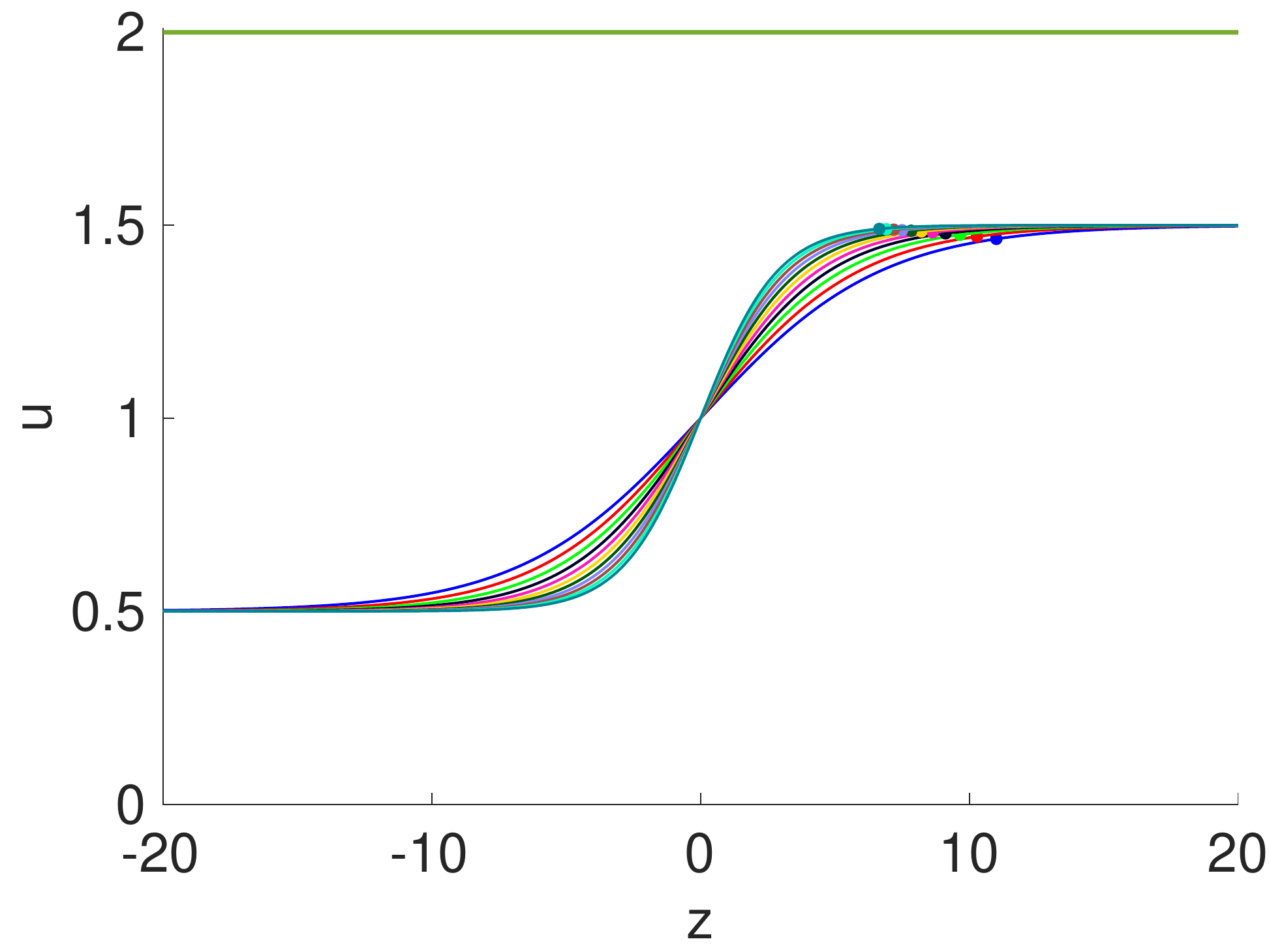}
    \includegraphics[width=0.49\textwidth]{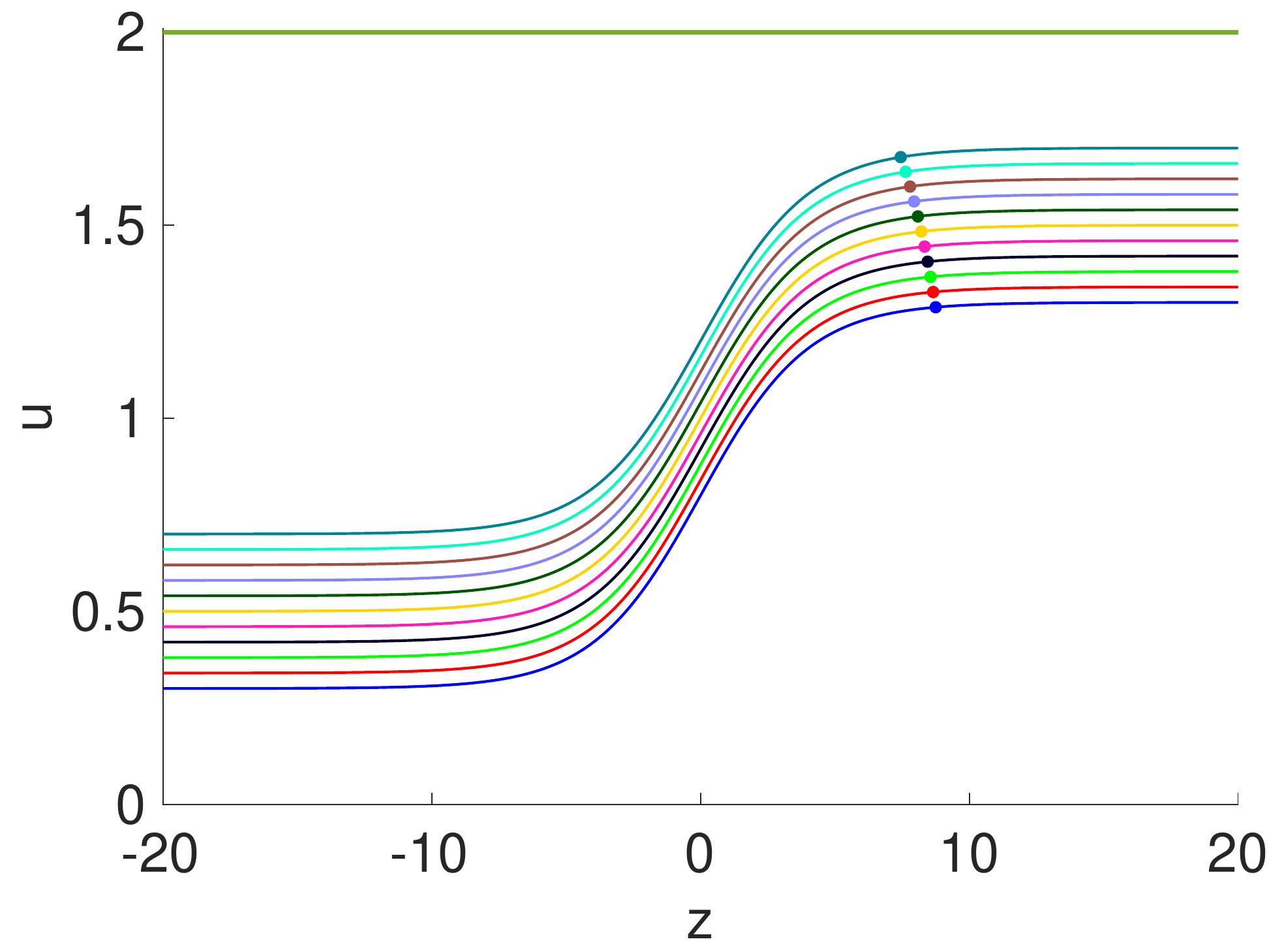}
  \caption{Plot of the constraint $u$ in \eqref{example_opt} as a
    function of $z$ for different values of $\theta$. Left: varying
    $\theta_1$ from $0.3$ to $0.7$ with $\theta_2=0.5$ fixed; right:
    $\theta_2$ varying from $0.3$ to $0.7$ with $\theta_1=0.5$
    fixed. Each curve corresponds to a different $\theta$ and each dot
    corresponds to the solution of \eqref{example_opt} for that given
    $\theta$. The green horizontal line is the target $u=2$.}
  \label{fig:logistic_example}
\end{figure}

\end{example}

In many applications, the parameter space $\Theta$ may be a product of
sets corresponding to different physical parameters. For instance,
there may be parameters corresponding to functions defined on the
PDE's computational domain (such as a spatially distributed
coefficient in the PDE), functions defined on the boundary (such as a
non homogeneous Dirichlet boundary condition), or scalar parameters
(such as constant coefficients in the PDE). It is useful to assign a
scalar measure of sensitivity for each set of parameters. To do so,
consider $\Theta = \Xi_1 \times \Xi_2 \times \cdots \times \Xi_T$ as
the product of $T$ different parameter sets and define the local set
sensitivity index as
\begin{eqnarray}
\label{set_sensitivity}
\mathcal S_{\Xi_i}(\theta_0,z_0) =  \max_{\phi \in \Theta} \left\vert \left\vert \D(\theta_0;z_0) \Pi_{\Xi_i} \frac{\phi}{\vert \vert \phi \vert \vert_\Theta} \right\vert \right\vert_Z \qquad i=1,2,\dots,T,
\end{eqnarray}
where $\Pi_{\Xi_i}: \Theta \to \Theta$ is the projection operator
which maps $\phi=(\xi_1,\xi_2,\dots,\xi_T) \in \Xi_1 \times \Xi_2
\times \cdots \times \Xi_T$ to $\Pi_{\Xi_i}
\phi=(0,0,\dots,0,\xi_i,0,\dots,0) \in \Xi_1 \times \Xi_2 \times
\cdots \times \Xi_T$, i.e. it annihilates all parameter variability
except those in $\Xi_i$. For complex multi-physics problems where $T$
may be large (we consider $T=5$ as large), the set of indices
$\{S_{\Xi_i}(\theta_0,z_0) \}_{i=1}^T$ may be easily tabulated or visualized to see
the relative importance of the different parameter sets. For such
complex problems, \eqref{sensitivity_function} provides detailed
information but it may be difficult to compare the sensitivities of
different parameters if, for instance, they have different time and/or
space dependencies. The set sensitivity indices
\eqref{set_sensitivity} provide a simple summary.

\subsection{Global Sensitivity Analysis}
Caution must be exercised when interpreting local sensitivities
because of their dependence on the nominal parameter $\theta_0$. If
$\theta$ is uncertain and the function $\F$ is nonlinear, then local
sensitivities may not be adequate to make inferences about the
parameters. To address this issue, we introduce a global sensitivity
function. There are two important, and related, points to consider
when defining a global sensitivity function:
\begin{enumerate}
\item[$\bullet$] The term ``global" refers to being global in the parameter space, not in the optimization variables. In general, only local optimality of the optimization problem \eqref{opt_gen} may be ensured, but we may perform analysis at different $\theta_0$'s thus making the analysis global in the parameter space.
\item[$\bullet$] For a fixed $\theta_0$, our analysis is only valid around the local minimum $(u_0,z_0)$. It is necessary to account for different local minima which may occur for a fixed $\theta_0$.
\end{enumerate}

As in classical global sensitivity analysis, let $\mathbf \Theta$ be a
random field (or vector if $\Theta=\R^n$), taking values in $\Theta$,
which encodes the uncertainty in the parameters.  The presence of
multiple local minima poses an additional challenge beyond what is
typically considered in global sensitivity analysis. To formally
define a global sensitivity function, let $\mathbf I$ be a random
field taking values in $U \times Z$. Realizations of $\mathbf I$ are
used as initial iterates for an optimization routine.  Assume that
each $(\theta_0,I_0)$ is a realization of $(\mathbf \Theta, \mathbf
I)$ and is uniquely associated with a particular local minimum
$(u_0(\theta_0,I_0),z_0(\theta_0,I_0))$ of \eqref{opt_gen}. This
ensures that the global sensitivity function introduced below is
well defined. If the initial iterate is set to zero, then $\mathbf I$
will equal zero with probability 1. Otherwise, random initial iterates
can be used in which case different local minima may exist for fixed
parameters and different realizations of $\mathbf I$. The sensitivity indices defined below will depend on the choice of the distribution of $\mathbf I$; however, the presence of multiple local minima is well understood in the optimization community and is frequently managed by leveraging application specific knowledge to choose initial iterates judiciously.

For each $(\theta_0,I_0)$ as a realization of $(\mathbf \Theta,\mathbf
I)$, there is a unique operator $\D(\theta_0;z_0(\theta_0,I_0))$
corresponding to \eqref{sen_operator} when $\theta=\theta_0$ and the
local minimum is identified by solving \eqref{opt_gen} with initial
iterate $I_0 \in U \times Z$. Then considering
\eqref{sensitivity_function} as a function of $(\mathbf \Theta,
\mathbf I)$ yields that $\mathcal S(\mathbf \Theta;z_0(\mathbf
\Theta,\mathbf I))$ is a random field. Assume that $\mathcal S(\mathbf
\Theta;z_0(\mathbf \Theta,\mathbf I)) \phi$ is measurable for each
$\phi \in \Theta$. Mimicking ideas in derivative-based global
sensitivity analysis \cite{kucherenko_derivative, delsa,dgsm1,dgsm2},
we define the global hyper-differential sensitivity function as
$\mathcal S^G:\Theta \to \R$,
\begin{eqnarray}
\label{global_sensitivity_function}
\mathcal S^G \phi = \mathbb E_{\mathbf \Theta,\mathbf I} \left[ \mathcal S(\mathbf \Theta;z_0(\mathbf \Theta,\mathbf I))\phi \right],
\end{eqnarray}
where $\mathbb E_{\mathbf \Theta,\mathbf I} \left[ \cdot \right]$
denotes the expected value computed with respect to the distribution
of $(\mathbf \Theta,\mathbf I)$. We will frequently refer to
\eqref{global_sensitivity_function} as a global sensitivity for
short. Global set sensitivity indices may be defined in a similar
manner by taking the expectation of \eqref{set_sensitivity}, we omit
the details for brevity.

The global sensitivity function $\mathcal  S^G$ may be related to the solution
of the optimization problem \eqref{opt_gen} as follows. Using the same arguments as in the definition of $\mathcal S^G$, we associate each $\theta_0,I_0$ with a unique optimal solution $u_0,z_0,\lambda_0$. The implicit function theorem ensures the existence of a unique function mapping parameters in a neighborhood of $\theta_0$ to optimal solutions in a neighborhood of $u_0,z_0,\lambda_0$, denote it as $\F_{\theta_0,I_0}$ (it was denoted as $\F$ in Subsection~\ref{sec:loc_sen}). Theorem~\ref{sensitivity_opt_thm} shows that the global sensitivity function is related to the average local changes in the optimal solutions.

\begin{theorem}
\label{sensitivity_opt_thm}
Assume that
\begin{enumerate}
\item[$\bullet$] $\phi \in \Theta$ with $\vert \vert \phi \vert \vert_\Theta =1$,
\item[$\bullet$] there exists $\delta > 0$ such that $\delta < \inf\limits_{\theta \in \Theta,I \in U \times Z} r(\theta,I)$, where $r(\theta,I)$ denotes the radius of the neighborhood on which $\F_{\theta,I}$ is defined
\item[$\bullet$] the operators from $\Theta \times U \times Z$ to $Z$ defined by $(\theta,I) \mapsto \P \F_{\theta,I}(\theta+\delta \phi,I)$ and $(\theta,I) \mapsto \P \F_{\theta,I}(\theta,I)$ are measurable
\item[$\bullet$]  each $\P \F_{\theta,I}'$ is Lipschitz continuous with constant $L(\theta,I)$
\item[$\bullet$] $\exists Q \in \R$ such that $Q \ge \sup_{\theta,I} L(\theta,I)$ 
\end{enumerate}
Then
\begin{eqnarray*}
\mathbb E_{\mathbf \Theta,\mathbf I} \left[ \vert \vert \P \F_{\Theta,\mathbf I} (\mathbf \Theta+\delta\phi,\mathbf I) - \P \F_{\Theta,\mathbf I} (\mathbf \Theta,\mathbf I) \vert \vert_Z \right] \le \delta  \mathcal S^G\phi + Q \delta^2.
\end{eqnarray*}
\end{theorem}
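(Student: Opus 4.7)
The plan is to establish a pathwise estimate, for each realization $(\theta_0,I_0)$ of $(\mathbf\Theta,\mathbf I)$, via a first-order Taylor expansion with integral remainder, and then take expectation. The two key ingredients are the identification $\P\F_{\theta_0,I_0}'(\theta_0)=\D(\theta_0;z_0(\theta_0,I_0))$ coming from the implicit function formula in Subsection~\ref{sec:loc_sen}, and the Lipschitz bound on $\P\F_{\theta_0,I_0}'$.

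First I would fix $(\theta_0,I_0)$. The hypothesis $\delta<r(\theta_0,I_0)$ combined with $\|\phi\|_\Theta=1$ guarantees that the segment $\{\theta_0+t\delta\phi:t\in[0,1]\}$ lies inside the neighborhood on which $\F_{\theta_0,I_0}$ is defined, so the fundamental theorem of calculus applies along this segment:
\begin{align*}
\P\F_{\theta_0,I_0}(\theta_0+\delta\phi)-\P\F_{\theta_0,I_0}(\theta_0)
&= \int_0^1 \P\F_{\theta_0,I_0}'(\theta_0+t\delta\phi)(\delta\phi)\,dt \\
&= \delta\,\D(\theta_0;z_0(\theta_0,I_0))\phi + \int_0^1 \bigl[\P\F_{\theta_0,I_0}'(\theta_0+t\delta\phi)-\P\F_{\theta_0,I_0}'(\theta_0)\bigr](\delta\phi)\,dt.
\end{align*}
Applying the triangle inequality, the Lipschitz bound with constant $L(\theta_0,I_0)$, and $\|\phi\|_\Theta=1$, and recalling that $\|\D(\theta_0;z_0(\theta_0,I_0))\phi\|_Z=\mathcal S(\theta_0;z_0(\theta_0,I_0))\phi$ since $\phi$ has unit norm, yields the pathwise bound
\begin{equation*}
\|\P\F_{\theta_0,I_0}(\theta_0+\delta\phi)-\P\F_{\theta_0,I_0}(\theta_0)\|_Z \le \delta\,\mathcal S(\theta_0;z_0(\theta_0,I_0))\phi + \tfrac{1}{2}L(\theta_0,I_0)\,\delta^2.
\end{equation*}

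Next I would take expectation over the joint distribution of $(\mathbf\Theta,\mathbf I)$. The listed measurability of $(\theta,I)\mapsto\P\F_{\theta,I}(\theta+\delta\phi)$ and $(\theta,I)\mapsto\P\F_{\theta,I}(\theta)$, combined with continuity of the norm, makes the left-hand side a measurable random variable. Using linearity of expectation, the definition \eqref{global_sensitivity_function} of $\mathcal S^G$, and the almost sure bound $L(\mathbf\Theta,\mathbf I)\le Q$ gives
\begin{equation*}
\mathbb E_{\mathbf\Theta,\mathbf I}\bigl[\|\P\F_{\mathbf\Theta,\mathbf I}(\mathbf\Theta+\delta\phi)-\P\F_{\mathbf\Theta,\mathbf I}(\mathbf\Theta)\|_Z\bigr] \le \delta\,\mathcal S^G\phi + \tfrac{1}{2}Q\,\delta^2,
\end{equation*}
which is at least as tight as the claimed inequality.

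The main obstacle is not conceptual but one of bookkeeping: making sure every object in the pathwise representation is well defined and behaves measurably in $(\mathbf\Theta,\mathbf I)$. The theorem's hypotheses are designed precisely for this purpose. The uniform lower bound $\delta<\inf_{\theta,I} r(\theta,I)$ allows the same segment argument to be used simultaneously for all realizations; the measurability assumption on the two explicit evaluations handles the integrability of the left-hand side; and the uniform Lipschitz constant $Q$ makes the remainder term integrable. With these in place the argument reduces to the standard Taylor estimate for a $C^1$ map whose derivative is Lipschitz, combined with the identification $\P\F'=\D$ and the normalization $\|\phi\|_\Theta=1$.
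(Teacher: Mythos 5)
Your proof is correct and follows essentially the same strategy as the paper's: a pathwise first-order estimate at each realization $(\theta_0,I_0)$, identification of $\P\F_{\theta_0,I_0}'(\theta_0)$ with $\D(\theta_0;z_0(\theta_0,I_0))$ so the leading term becomes $\delta\,\mathcal S(\theta_0;z_0(\theta_0,I_0))\phi$, a Lipschitz bound on the remainder, and then an expectation over $(\mathbf\Theta,\mathbf I)$. The only difference is that you use the integral form of the Taylor remainder where the paper invokes the mean value inequality at an intermediate point $\theta_0+\epsilon\phi$; your version is marginally cleaner for a map into the Hilbert space $Z$ and yields the slightly sharper remainder $\tfrac{1}{2}Q\delta^2$ in place of $Q\delta^2$, which of course still implies the stated bound.
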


\begin{proof}
Let $(\theta_0,I_0)$ be a fixed realization of $(\mathbf \Theta,\mathbf I)$. The mean value inquality implies that
\begin{align*}
 \vert \vert \P \F_{\theta_0,I_0} (\theta_0+\delta\phi,I_0) - \P \F_{\theta_0,I_0}(\theta_0,I_0) \vert \vert_Z \le & \vert \vert  \P \F'_{\theta_0,I_0}(\theta_0+\epsilon \phi,I_0) \delta \phi \vert \vert_Z \\
\end{align*}
where $0 \le \epsilon \le \delta$. Then observe that
\begin{align*}
 \vert \vert  \P \F'_{\theta_0,I_0}(\theta_0+\epsilon \phi,I_0) \delta \phi \vert \vert_Z = & \vert \vert \P \F'_{\theta_0,I_0}(\theta_0,I_0) \delta \phi  +  \P \F'_{\theta_0,I_0}(\theta_0+\epsilon \phi,I_0) \delta \phi - \P \F'_{\theta_0,I_0}(\theta_0,I_0) \delta \phi \vert \vert_Z \\
 & \le \delta \vert \vert \P \F'_{\theta_0,I_0}(\theta_0,I_0) \phi \vert \vert_Z + \delta \vert \vert \P \F'_{\theta_0,I_0}(\theta_0+\epsilon \phi,I_0) - \P \F'_{\theta_0,I_0}(\theta_0,I_0) \vert \vert_Z
\end{align*}
Using the definition of the local sensitivity function and the Lipschitz assumption on $\P \F'_{\theta,I}$ we have
\begin{align*}
 \vert \vert \P \F_{\theta_0,I_0} (\theta_0+\delta\phi,I_0) - \P \F_{\theta_0,I_0}(\theta_0,I_0) \vert \vert_Z \le & \delta \mathcal S(\theta_0,z_0(\theta_0,I_0)) \phi + \delta Q \epsilon.
\end{align*}
Recalling that $\epsilon \le \delta$ and taking the expectation over all $(\theta_0,I_0)$ completes the proof.
\end{proof}

Theorem~\ref{sensitivity_opt_thm} cannot be used to determine computable bounds nor may its assumptions be easily verified in practice; however, it provides a basic intuition about the global sensitivity function by formally connecting its definition back to the optimization problem. The constant $Q$ corresponds to
the nonlinearity of the operator $\F_{\theta,I}$, if $\F_{\theta,I}$ is approximately linear then
$Q$ will be small. Theorem~\ref{sensitivity_opt_thm} implies that the
average change in the optimal solution when the nominal parameters are
perturbed in the direction $\phi$ is bounded by the global sensitivity
function acting on $\phi$, and a measure of nonlinearity. 

Computing the expected value in $\mathcal S^G$ is very difficult in
practice. As elaborated in Section~\ref{sec:postprocessing}, a sparse
sampling approach is taken and the variability of the local
sensitivity function is used as a heuristic to assess the nonlinearity of the parameter to optimal solution mapping. This approach is akin to derivative-based global
sensitivity analysis \cite{kucherenko_derivative, delsa,dgsm1,dgsm2}
and Morris screening
\cite{iooss,morris}. Example~\ref{linearity_example} illustrates a
special case where the parameter to optimal solution mapping is linear.

\begin{example}
\label{linearity_example}
\textit{Consider the following optimization problem}
\begin{align}
& \min_{u,z} \hspace{3 mm} J(u,z)=\frac{1}{2} \vert \vert u-d \vert \vert^2  \\
& s.t. \hspace{5 mm} c(u,z,\theta)=A(\theta)u-z \nonumber
\end{align}
\textit{where $d \in U$ is a target state and $A(\theta)$ is linear
  differential operator and is linear in its parameters. The solution
  of the PDE is given by
$$u(z;\theta)=A^{-1}(\theta)z,$$ which is a nonlinear function of $\theta$. To perform sensitivity analysis of the PDE solution with respect to $\theta$ requires adequate sampling of the parameter space to account for the nonlinearity. On the other hand, the optimal solution is given by 
$$z_{opt}(\theta) = A(\theta)d.$$
Hence the optimal $z$ is a linear function of $\theta$ even through
the PDE solution is a nonlinear function of $\theta$. }

Even though the assumptions in Example~\ref{linearity_example} are too
restrictive to permit any general conclusions, the example illustrates
a case where it is easier to compute sensitivities of the optimal $z$
than sensitivities of the PDE solution.  Local sensitivity analysis of
the optimal solution is therefore sufficient. In general, local
sensitivities should be computed at various samples from parameter
space to assess the nonlinearity.
\end{example}

\section{Computations of Local Sensitivities}
\label{sec:comp_local_sen}

Having formally defined sensitivities, next we present our approach to
efficiently compute local sensitivities. In particular we seek low
rank structure in the parameter space through the SVD of an operator
arising from the solution of the PDE-constrained optimization problem.
Computation of global sensitivities will be considered in
Section~\ref{sec:postprocessing}.

\subsection{Finite Dimensional Approximation}
\label{subsection:Discretization}
The optimization problem \eqref{opt_gen} is discretized by defining
the finite dimensional subspaces
$U_h \subseteq U, Z_h \subseteq Z, \Lambda_h \subseteq \Lambda,
\Theta_h \subseteq \Theta$. In practice these subspaces typically
arise from a discretization of the PDE, for instance, a finite element
discretization. Let $\theta_0 \in \Theta_h$ be the nominal parameters,
and $(u_0, z_0,\lambda_0) \in U_h \times Z_h \times \Lambda_h$ be a
local minimum of the discretization of \eqref{opt_gen} with nominal
parameters $\theta_0$.

Letting $\{\phi_1,\phi_2,\dots,\phi_n\}$ be a basis for $\Theta_h$, we
define the local hyper-differential sensitivity indices as
\begin{eqnarray}
\label{local_sensitivity_index}
S_i(\theta_0,z_0) = \mathcal S(\theta_0,z_0)\phi_i, \qquad i=1,2,\dots,n.
\end{eqnarray}
Global hyper-differential sensitivity indices $S_i^G$,
$i=1,2,\dots,n$, may be defined in a similar manner by having
\eqref{global_sensitivity_function} act on the basis functions. For
simplicity we assume that $\vert \vert \phi_i \vert \vert_\Theta=1$ for
$i=1,2,\dots,n$.

The indices $S_i(\theta_0,z_0)$, $i=1,2,\dots,n$, may be computed
directly by solving \eqref{opt_gen} once, and subsequently computing
each $S_i(\theta_0,z_0)$ separately. This approach requires solving
$n$ linear systems with coefficient matrix $\K$ (or solving a block
system with $n$ right hand sides). However, in many applications $\B$
(in \eqref{K_B_ops}) possesses a low rank structure which may be
exploited to accelerate computation.

Assume that $\D(\theta_0;z_0)$ is a compact operator. Letting
$\sigma_k,\theta_k,z_k$, $k=1,2,\dots$ denote the singular values and
vectors of $\D(\theta_0;z_0)$,
i.e. $\D(\theta_0;z_0)\theta_k=\sigma_kz_k$, we have
\begin{eqnarray}
\label{sensitivity_svd_representation}
S_i(\theta_0,z_0) = \sqrt{ \sum\limits_{k=1}^\infty \sigma_k^2 (\theta_k,\phi_i)_\Theta^2 }, \qquad i=1,2,\dots,n,
\end{eqnarray}
where $(\cdot,\cdot)_\Theta$ denotes the inner product on the Hilbert space $\Theta$.

Truncating the series in \eqref{sensitivity_svd_representation} yields the approximations
\begin{eqnarray*}
S_i(\theta_0,z_0) \approx \sqrt{ \sum\limits_{k=1}^K \sigma_k^2 (\theta_k,\phi_i)_\Theta^2 }, \qquad i=1,2,\dots,n.
\end{eqnarray*}
The singular values/vectors $\sigma_k,\theta_k$, $k=1,2,\dots,K$ may
frequently be computed with far fewer than $n$ applications of the
operator $\D(\theta_0;z_0)$. For many problems in practice, $n$ is on
the order of hundreds, thousands, or more, whereas $K$ on the order of
tens may be sufficient to accurately approximate $S_i$,
$i=1,2,\dots,n$. Leveraging the truncated SVD representation of
$\D(\theta_0;z_0)$ may reduce the number of linear system solves by an
order of magnitude (or more) if such low rank structure exists. The
presence (or lack thereof) of low rank structure is easily identified
by computing the leading singular values of $\D(\theta_0;z_0)$, so the
approximation is easily certified in practice.

In addition to facilitating efficient estimation of the sensitivity
indices, the truncated SVD also provides directions of greatest
sensitivity in parameter space (the right singular vectors). These
directions provide an alternative coordinate system in parameter space
with gives a useful low dimensional representation. Though different
in several ways, these singular vectors and sensitivity indices have a
similar intuition as active subspaces \cite{active_subspaces} and
activity scores \cite{activity_scores}, respectively. Additionally,
the left singular vectors in the space $Z$ indicate which features of the optimal
solution (in space and/or time) are most sensitivity to the
parameters.

Assuming that the singular values/vectors $\sigma_k,\theta_k, z_k$,
$k=1,2,\dots,K$ have been computed, the local set sensitivity index
\eqref{set_sensitivity} may be estimated by computing the largest
singular value of the linear operator
\begin{eqnarray*}
\phi \mapsto \sum\limits_{k=1}^K \sigma_k z_k (\theta_k,\Pi_{\Xi_i} \phi)_\Theta.
\end{eqnarray*}
For low rank operators (small $K$), the computational cost of this
estimate is negligible in comparison to computing $\sigma_k,\theta_k,
z_k$, $k=1,2,\dots,K$.

For problems where $n$ is large and $\D(\theta_0;z_0)$ does not admit
a low rank structure, computing the local sensitivity indices
\eqref{local_sensitivity_index} is prohibitive. In such cases, the
local set sensitivity indices \eqref{set_sensitivity} may be estimated
by using an SVD routine to compute the leading singular value of
$\D(\theta_0;z_0) \Pi_{\Xi_i}$ for each $i=1,2,\dots,T$. This is a
significant saving when, for instance, $T = \mathcal O(5)$ and $n =
\mathcal O(1,000)$, which occurs when there are multiple temporally
and/or spatially dependent parameters.

\subsection{Computing the Truncated SVD}

The operator $\D$ is defined on function spaces so the truncated SVD
  should be computed using the inner products from $\Theta_h$ and
  $Z_h$. To achieve this, an SVD routine using Euclidean inner
  products may be applied to the matrix
\begin{eqnarray}
\label{computation_op}
R_{Z_h} D R_{\Theta_h}^{-1} ,
\end{eqnarray}
where $R_{\Theta_h}$ and $R_{Z_h}$ are the Cholesky factors of the
symmetric positive definite mass matrices (or weighting matrices more
generally)
$M_{\Theta_h} = R_{\Theta_h}^T R_{\Theta_h} \in \R^{n \times n}$ and
$M_{Z_h} = R_{Z_h}^T R_{Z_h} \in \R^{m \times m}$ defined by
\begin{eqnarray*}
(M_{\Theta_h})_{i,j} = (\phi_i,\phi_j)_{\Theta} \qquad (M_{Z_h})_{i,j} = (y_i,y_j)_{Z},
\end{eqnarray*}
where $\{y_1,y_2,\dots,y_m\}$ is a basis for $Z_h$.

The matrix $D=C_{Z_h} \D E_{\Theta_h} \in \R^{m \times n}$ represents
the action of $\D$ on coordinates in the discretized spaces, where
$E_{\Theta_h}: \R^n \to \Theta_h \text{ and } C_{Z_h}: Z_h \to \R^m$
denote the coordinate transformation operators.  Computing the
truncated SVD of \eqref{computation_op} directly is not scalable
because the Cholesky factors $R_{Z_h}$ and $R_{\Theta_h}$ will be
dense whereas the mass matrices are typically sparse. In what follows,
we propose to a symmetric generalized eigenvalue problem instead, and
mitigate computational limitations by introducing the use of a
randomized generalized eigenvalue solver from \cite{arvind} which
facilitates parallel evaluations of matrix vector products.

\subsection{Proposed Reformulation}
\label{sec:reformulation}
For notational simplicity, $\theta$ and $z$ are used to denote the
coordinate representations for parameters and optimization variables
throughout this section. The singular values and singular vectors of
\eqref{computation_op} correspond to the positive eigenvalues and
eigenvectors of the Jordan-Wielandt matrix
\begin{eqnarray*}
 W= \left( \begin{array}{cc}
0 & R_{Z_h} D R_{\Theta_h}^{-1} \\
(R_{\Theta_h}^{-1})^T D^T (R_{Z_h})^T & 0 \\
\end{array} \right) .
\end{eqnarray*}

This gives the symmetric eigenvalue problem
\begin{eqnarray}
\label{eig_problem}
W  \left( \begin{array}{c}
z\\
\theta\\
\end{array} \right)
= \alpha
 \left( \begin{array}{c}
z\\
\theta\\
\end{array} \right)
\end{eqnarray}
for which we seek to compute the largest eigenvalues.

To avoid computing the Cholesky factors of the mass matrices, define
\begin{eqnarray*}
X= \left( \begin{array}{cc}
R_{Z_h}^{-1} & 0 \\
0 & R_{\Theta_h}^{-1} \\
\end{array} \right)
\qquad \text{and} \qquad
 \left( \begin{array}{c}
\tilde{z}\\
\tilde{\theta}\\
\end{array} \right)
= X
\left( \begin{array}{c}
z\\
\theta\\
\end{array} \right) .
\end{eqnarray*}
Then we may reformulate \eqref{eig_problem} as
\begin{eqnarray}
\label{gen_eigen}
A  \left( \begin{array}{c}
\tilde{z}\\
\tilde{\theta}\\
\end{array} \right)
 = \alpha B  \left( \begin{array}{c}
\tilde{z}\\
\tilde{\theta}\\
\end{array} \right).
\end{eqnarray}
where
\begin{eqnarray*}
A=(X^{-1})^T W X^{-1}
= \left( \begin{array}{cc}
0 & M_{Z_h} D \\
 D^T M_{Z_h} & 0 \\
\end{array} \right)
\qquad
\text{and}
\qquad
B=(X^{-1})^T X^{-1} 
= \left( \begin{array}{cc}
M_{Z_h} & 0 \\
0 & M_{\Theta_h} \\
\end{array} \right),
\end{eqnarray*}

Since $A$ is symmetric and $B$ is symmetric positive definite, a
symmetric generalized eigenvalue problem must be solved. Symmetry in
our formulation ensures good numerical properties and leverages
powerful theoretical results from linear algebra
\cite{structured_eigen}. We propose to solve the generalized
eigenvalue problem with a randomized algorithm, the advantages of this
approach will be elaborated on in Section~\ref{sec:algo_overview}.

Assume that the $K$ largest eigenvalues $\alpha_k > 0 $ and
corresponding eigenvectors $(\tilde{z}_k,\tilde{\theta_k})$,
$k=1,2,\dots,K$, of \eqref{gen_eigen} have been computed. The
corresponding singular values, right singular vectors, and left
singular vectors of \eqref{computation_op} (using Euclidean inner
products) are $\alpha_k$, $R_{\Theta_h} \tilde{\theta}_k$, and
$R_{Z_h} \tilde{z}_k$, $k=1,2,\dots,K$, respectively. It appears that
the mass matrix Cholesky factor is needed to compute these singular
vectors; however, as shown in \cite{griesse2}, the coordinate
representation of the right and left singular vectors of
\eqref{sen_operator} (using $\Theta_h$ and $Z_h$ inner products) are
given by
\begin{eqnarray*}
\frac{\tilde{\theta}_k}{\sqrt{ \tilde{\theta}_k^T M_{\Theta_h} \tilde{\theta}_k } }
\qquad
\mbox{and}
\qquad
 \frac{\tilde{z}_k}{\sqrt{ \tilde{z}_k^T M_{Z_h} \tilde{z}_k } },
\end{eqnarray*}
  $k=1,2,\dots,K$, respectively. Hence we only compute the largest
eigenvalues and eigenvectors of \eqref{gen_eigen}, and normalize them
with the mass matrices.
  
An alternative formulation solves $D^T M_{Z_h} D \theta = \alpha
M_{\Theta_h} \theta$ instead of \eqref{gen_eigen}. This system is $n
\times n$ positive definite, instead of $(n+m) \times (n+m)$
indefinite, and squares the singular values, which is advantageous for
computing singular values greater than one. However, this formulation
requires an additional collection of $K$ matrix vector products to
compute the left singular vectors $z_k$, $k=1,2,\dots,K$. Both
formulations were considered, the results in this article focus on the
generalized eigenvalue problem \eqref{gen_eigen} because of its ease
computing the left singular vectors.

\section{Algorithmic Overview}
\label{sec:algo_overview}
Algorithm~\ref{alg:hyperdiff} below provides an overview of
hyper-differential sensitivity analysis and highlights the important
computational features.  The solution to the underlying
PDE-constrained problem is encapsulated in Line 3. A core component of
Algorithm~\ref{alg:hyperdiff} is the randomized algorithm, in Lines
4-12, used to solve the generalized eigenvalue problem. We begin by
motivating the randomized generalized eigenvalue solver and
subsequently consider the computational complexity of
Algorithm~\ref{alg:hyperdiff}.

\begin{algorithm}
\caption{Hyper-differential Sensitivity Analysis Algorithm}
\label{alg:hyperdiff}
\textbf{Input: } number of parameter samples $N$, number of singular pairs $K$, oversampling factor $L$\\
\textbf{1: } for $j$ from 1 to $N$ (embarrassingly parallel loop) \\
\textbf{2: } \hspace{1 cm} sample $(\overline{\theta}^j,\overline{I}^j)$ from the distribution of $(\mathbf \Theta,\mathbf I)$\\
\textbf{3: } \hspace{1 cm} solve \eqref{opt_gen} with $\theta=\overline{\theta}^j$ and initial iterate $\overline{I}^j$ and store solution $u_{opt}^j,z_{opt}^j$\\
\textbf{4: } \hspace{1 cm} for $i$ from 1 to $2K+L$ (embarrassingly parallel loop)\\
\textbf{5: } \hspace{2 cm} draw a standard normal sample 
$\left( \begin{array}{c}
\tilde{z}^i \\
\tilde{\theta}^i \\
\end{array} \right)$
and compute $y_i = B^{-1} A
\left( \begin{array}{c}
\tilde{z}^i \\
\tilde{\theta}^i \\
\end{array} \right) \in \R^{m+n}$\\
\textbf{6: } \hspace{1 cm} end\\
\textbf{7: } \hspace{1 cm} compute the decomposition $QR=[y_1,y_2,\dots,y_{2K+L}] \in \R^{(m+n)\times (2K+L)}$ with $B$ inner products\\
\textbf{8: } \hspace{1 cm} for $i$ from 1 to $2K+L$ (embarrassingly parallel loop) \\
\textbf{9: } \hspace{2 cm} compute $Aq_i \in \R^{(m+n)}$ where $Q=[q_1,q_2,\dots,q_{2K+L}] \in \R^{(m+n) \times (2K+L)}$\\
\textbf{10:} \hspace{1 cm} end\\
\textbf{11:} \hspace{1 cm}  form $T=Q^T A Q \in \R^{(2K+L) \times (2K+L)}$\\
\textbf{12:} \hspace{1 cm}  compute the eigenvalue decomposition $T=VEV^T$ with ordering $E_{1,1} \ge E_{2,2} \ge \cdots \ge E_{2K+L}$ \\
\textbf{13:} \hspace{1 cm} form
$\left( \begin{array}{c}
\tilde{z}_k \\
\tilde{\theta}_k \\
\end{array} \right)=Qv_k \in \R^{(m+n)}$, $k=1,2,\dots,K$, where $V=[v_1,v_2,\dots,v_{2K+L}]$ \\
\textbf{14:} \hspace{1 cm} store singular values $\sigma_k^j = E_{k,k}$, $k=1,2,\dots,K$\\
\textbf{15:} \hspace{1 cm} compute and store singular vectors $z_k^j=\frac{\tilde{z}_k}{\tilde{z}_k^T M_{Z_h} \tilde{z}_k}$ and $\theta_k^j=\frac{\tilde{\theta}_k}{\tilde{\theta}_k^T M_{\Theta_h} \tilde{\theta}_k}$, $k=1,2,\dots,K$\\
\textbf{16:} end \\
\textbf{Return: } $u_{opt}^j, z_{opt}^j$, $\sigma_k^j$, $z_k^j,\theta_k^j$, $k=1,2,\dots,K$, $j=1,2,\dots,N$\\
\end{algorithm}

\subsection{Randomized Linear Algebra}
Randomized linear algebra has emerged as a powerful tool in scientific
computation \cite{randomized_la_review}. The utility of randomized
methods is that they permit a reordering of the computation which may
better exploit computing architectures. Most traditional algorithms
are inherently serial, for instance, constructing Krylov subspaces
require serial matrix-vector products since each vector is formed
using the previous ones. In contrast, randomized methods may require a
comparable number of matrix-vector products which can be computed in
parallel with minimal communication overhead.

We adopt the randomized generalized eigenvalue solver from
\cite{arvind}. It is well suited for estimating the largest
eigenvalues. The user specifies the desired number of eigenvalues $p$
and an oversampling factor $L$. Then $p+L$ matrix vector products are
computed (in parallel) by applying the coefficient matrix to
independently generated random vectors. The resulting vectors form an
approximation of the subspace of eigenvectors corresponding to the
largest eigenvalues. Inexpensive computation may be done in this
subspace to estimate the eigenvalues and eigenvectors.

Algorithm~\ref{alg:hyperdiff} inputs an integer $N$ specifying the
number of local sensitivities to compute, an integer $K$ specifying
the number of singular pairs to compute for each local sensitivity,
and an integer $L$ specifying the oversampling factor. Large values of
$N$ may be necessary to accurately estimate global sensitivities;
however, relatively small values of $N$, for instance $\mathcal
O(10)$, are frequently sufficient to capture important features. The
user should choose $N$ based on their available computational
resources. Ideally, the $N$ local sensitivities will provide similar
parameter inferences. If the local sensitivities differ significantly
(the operator $\F$ is highly nonlinear) then the user should exercise
caution making inferences with them. The ``optimal" choice for $K$ is
not clear a-priori; however, the singular values returned from
Algorithm~\ref{alg:hyperdiff} certify the choice of $K$ (by assessing
the low rank structure). The user should start with small values for
$K$, for instance $K=4$ is used in
Section~\ref{sec:numerical_results}, and increase it if necessary. If
there is not sufficient decay in the first $K$ singular values then
Lines 4-12 may be repeated to augment the existing computation. The
oversampling factor $L$ scales the cost versus accuracy. Typically $L
< 20$ (or even $L<10$) is sufficient, see \cite{randomized_la_review,
  arvind} and references therein. In Algorithm~\ref{alg:hyperdiff},
$2K+L$ random vectors are used (see Lines 4-6) because the eigenvalues
of $A$ correspond to positive and negative pairs of the desired
singular values.

The outer loop initialized in Line 1 of Algorithm~\ref{alg:hyperdiff}
is embarrassingly parallel. The most computationally intensive
portions within this loop are Line 3 (solving the PDE-constrained
optimization problem), Line 5 (applying $A$), and Line 9 (applying
$A$). All of the other calculations are simple linear algebra which
may be executed quickly using standard libraries. Solving the
PDE-constrained optimization problem in Line 3 involves a host of
complexities including trust region and/or line search globalization,
finite element discretization, solving large systems of (possibly
nonlinear) equations, and adjoint calculations to evaluate gradients
and/or hessians. In the scope of this work, we assume that efficient
solvers are available for this end but emphasize its complexity. Lines
5 and 9 are computationally intensive because applying $A$ requires
applying $\K^{-1}$ twice, which involves potentially many PDE solves.

The benefit of the randomized generalized eigenvalue solver is that
the $2K+L$ application of $A$ in Lines 5 and 9 may be
parallelized. The loops are embarrassingly parallel since the random
vectors are independent; however, barriers are necessary after each
loop because the data from each matrix vector product must be shared
across processors in order to form $Q$ and $T$ (Lines 7 and 11). 

\subsection{Algorithmic Complexity}

To asses the computational complexity of Algorithm~\ref{alg:hyperdiff}
we count the number of large scale linear system solves. For
simplicity our assessment focuses on Lines 4-15 as the computational
complexity of Line 3 is a question of PDE-constrained optimization and
the outer loop initialized at Line 1 simply scales the cost of Lines
2-15 by a factor $N$. The computational cost of Lines 4-15 is
approximately equal to the cost of applying $\mathcal K^{-1}$ $4(2K+L)$
times (twice in Lines 5 and 9). Assuming that each application of
$\mathcal K^{-1}$ requires an average of $s_{CG}$ iterations of
conjugate gradient, this amounts to $4(2K+L)s_{CG}$ times the cost of each
matrix vector product $\mathcal K v$. Since $\mathcal K$ is evaluated
at the optimal solution we may use the existing solution for the
primal and adjoint PDE solves, so computing $\mathcal K v$ requires a
state sensitivity solve (linear system involving the state Jacobian of
the constraint) and a adjoint sensitivity solve (linear system
involving the state Jacobian of the constraint transposed). The total
computational cost will be approximately $4(2K+L)s_{CG}$ state
sensitivity solves plus $4(2K+L)s_{CG}$ adjoint sensitivity
solves. 

The factor of $(2K+L)$ may be mitigated by the randomized solver. The loops in Lines 4-6 and Lines 8-10 are embarrassingly parallel. They have the potential to attain perfect parallel efficiency; however, there is a requirement of synchronization (sharing of data across processors) after Line 6 and 10 which may prevent them from attaining it. Assuming that $2K+L$ or more processors are used, and denoting the wall clock time to execute Line 5 as $T_{wall}(i)$, $i=1,2,\dots,2K+L$, then the wall clock time for Lines 4-6 will be $\max_{i=1,2,\dots,2K+L} T_{wall}(i)$. Perfect parallel efficiency occurs when $T_{wall}(1)=T_{wall}(2)=\cdots=T_{wall}(2K+L)$. The parallel efficiency is difficult to analyze in general because $T_{wall}(i)$ is primarily determined by the number of linear solver iterations required to invert $\mathcal K$, which depends upon the spectral characteristic of $\mathcal K$ and the right hand side of the linear system, which depends on properties of $\mathcal B$, $M_{Z_h}$, and the random vectors sampled in Line 5. A similar wall clock time analysis applies to the loop in Lines 8-10. 

If more than $2K+L$ processors are used, then the PDE solves required in Lines 5 and 9 may be parallelized as well. This parallelization has greater communication requirements and its efficiency will depends upon many factors, most notably the number of degrees of freedom in the discretized PDEs. We defer further discussion on this parallelism as it depends on many implementation specific factors and is beyond the scope of this article. 

\subsection{Interpretation}
\label{sec:postprocessing}

Estimating \eqref{global_sensitivity_function} accurately through
sampling based approaches may require extensive computational effort
(a large $N$ in Algorithm~\ref{alg:hyperdiff}). However, useful
information may be obtained by computing local sensitivities at a
sparse collection of samples from the parameter space. This section
details practical considerations computing, visualizing, and
interpreting samples of local sensitivities. The proposed approach is
demonstrated in Section~\ref{sec:numerical_results}.

Assume that Algorithm~\ref{alg:hyperdiff} has been executed yielding
optimal solutions and local sensitivities at $N$ (chosen based upon
the user's computational budget) different realizations of $(\mathbf
\Theta,\mathbf I)$. For each $j=1,2,\dots,N$, we have,
\begin{enumerate}
\item[$\bullet$] the state $u_{opt}^j$ and optimization variables
  $z_{opt}^j$ determined by solving \eqref{opt_gen} with initial
  iterate $\overline{I}^j$ when $\theta = \overline{\theta}^j$,
\item[$\bullet$] the $K$ leading singular triples of
  \eqref{sen_operator}, $(\sigma_k^j,\theta_k^j,z_k^j)$,
  $k=1,2,\dots,K$, in coordinate representation.
\end{enumerate}
The spectrum of \eqref{sen_operator} summarizes its low rank structure
(or lack thereof). The singular values $\sigma_k^j$, $k=1,2,\dots,K$,
$j=1,2,\dots,N$, may be visualized in a scatter plot which readily
reveals the structure of the leading singular values for different
samples in parameter space. Ideally we will see a decay in the
singular values for each $j$, indicating a low rank structure. If such
a low rank structure exists, the $K$ singular values and singular
vectors may be used to analyze the sensitivities. Otherwise, a larger
value of $K$ is needed. Our approach is designed to exploit low rank
structure which we are implicitly assuming is present.

The local sensitivity index \eqref{sensitivity_svd_representation} is
approximated for the $i^{th}$ parameter basis function using the
$j^{th}$ sample with
\begin{eqnarray}
\hat{S}_i^j = \sqrt{\sum_{k=1}^K (\sigma_k^j)^2  \left( \left(M_{\Theta_h}\theta_k^j\right)_i \right)^2 } \label{sensitivity_index}
\end{eqnarray}
for $i=1,2,\dots,m$, $j=1,2,\dots,N$;
$\left(M_{\Theta_h}\theta_k^j\right)_i$ denotes the $i^{th}$ entry of
the vector $M_{\Theta_h}\theta_k^j$. Then $\hat{S}_i^j$,
$i=1,2,\dots,m$, $j=1,2,\dots,N$, may be visualized in a scatter plot
which reveals the relative influence of the parameters and the
variability of their local sensitivity indices over the samples
$\overline{\theta}^j, \overline{I}^j$, $j=1,2,\dots,N$. This
variability provides a heuristic to measure the nonlinearity of the
parameter to optimal solution mapping. We hope to find a similar low
rank structure and local sensitivity indices for each parameter sample
which indicates desirable structure in the problem. If the local
sensitivities vary significantly over the (sparse) sampling of
parameter space, this indicates strong nonlinearities which will
mandate greater computational effort.

As discussed in Subsection~\ref{subsection:Discretization}, the local
set sensitivity indices may be estimated as a by-product using the
singular values and vectors. They may be easily visualized in a manner
similar to the local sensitivity indices. This is particularly useful
when the visualization of the local sensitivity indices becomes
cumbersome because of temporal and/or spatial dependencies of the
parameters.

Along with the spectrum and parameter sensitivity information above,
we may also visualize the resulting changes in the optimal solution
when the parameters are perturbed. In particular, $z_k^j$ is the
change in the optimal solution if the parameter vector
$\overline{\theta}^j$ is perturbed in the direction $\theta_k^j$. The
singular vectors $z_k^j$ may be visualized by overlaying them on a
plot, or plotting statistical quantities computed from the sample
$\{z_k^j\}_{j=1}^N$. In both cases, it informs us which features of
the optimal solution are most sensitive to the parametric uncertainty.

\section{Numerical Results}
\label{sec:numerical_results}
In this section we demonstrate our proposed hyper-differential
sensitivity analysis on two examples. The first is a control problem
for a steady state nonlinear multi-physics system modeling a high
pressure chemical vapor deposition (CVD) reactors, see
\cite{cvd,Kouri2018}. The second example is an inverse problem for a
transient multi-physics system modeling subsurface contaminant
transport.

Our HDSA software is implemented in the Rapid Optimization Library
(ROL) \cite{rol} of Trilinos \cite{Trilinos-Overview}, a collection of
C++ libraries for scientific computation.  The implementation is based
on C++, Trilinos parallel constructs, and special PDE-constrained
solver interfaces that generalize the solution procedure to a range of
PDE-based models.  ROL consists of state-of-the-art Newton Krylov
based optimization methods with both reduced and full space solution
methods (trust region and line search globalization). The HDSA
implementation is matrix free, has three levels of parallelism (two
embarrassingly parallel loops and parallel linear algebra constructs),
and may be easily adapted to a variety of applications. The parallel
and matrix free design achieves scalable performance.

\subsection{Control of Thermal Fluids}
In this subsection we consider control of the steady state nonlinear
multi-physics Boussinesq flow equations in two spatial dimensions, a
model for a CVD reactor. Reactant gases are injected in the top of a
reactor and flow downwards to create an epitaxial film on the
bottom. Vorticities created by buoyancy-driven convection inhibit some
gases from reaching the bottom of the reactor. Thermal fluxes are
controlled on the side walls of the reactor in order to minimize the
vorticity. Formally, consider the control problem,

\begin{figure}[h]
\centering
  \includegraphics[width=0.49\textwidth]{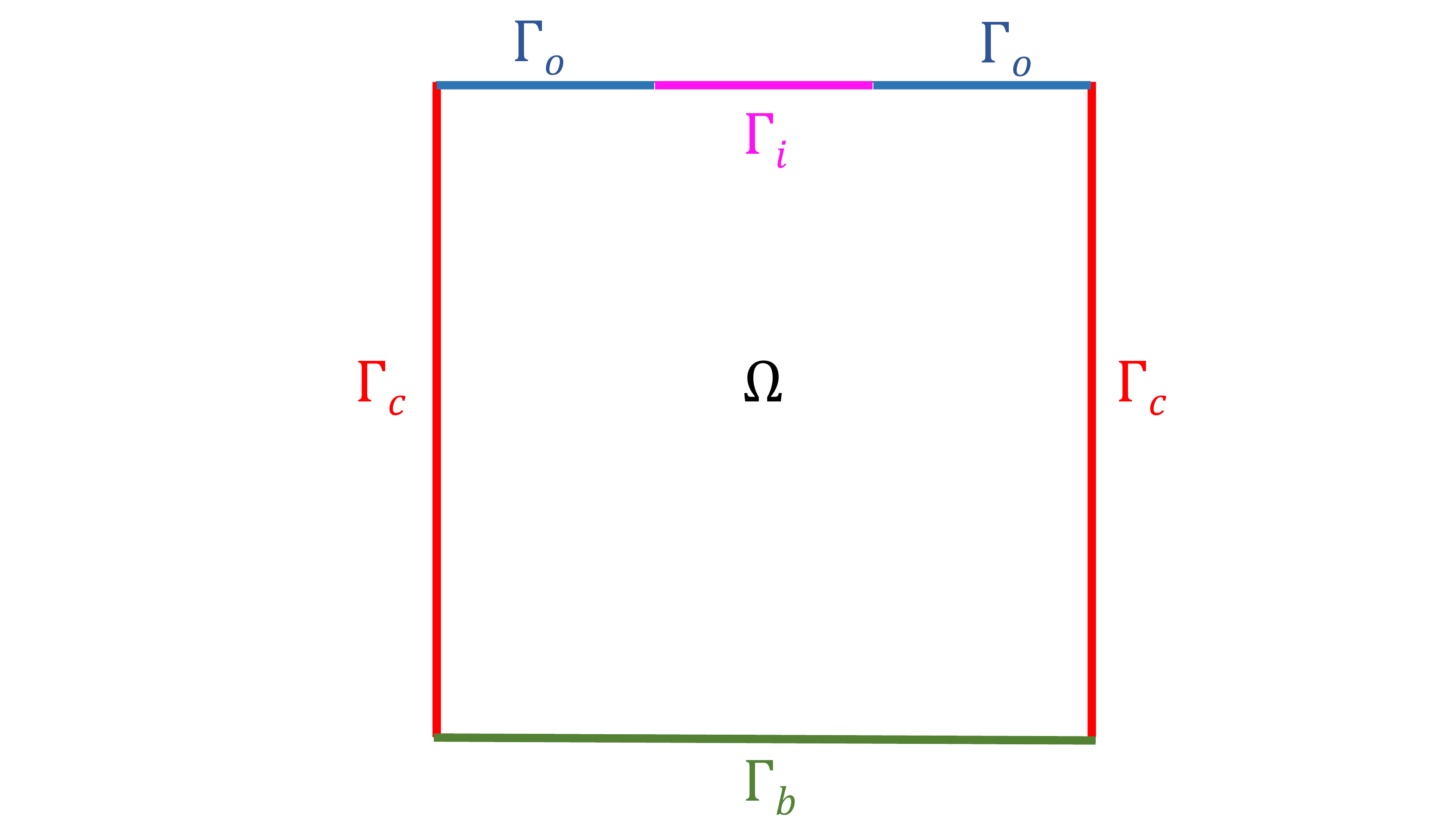}
  \caption{Computation domain and boundaries for \eqref{opt_boussinesq}.}
  \vspace{-.5 cm}
  \label{fig:domain}
\end{figure}

\begin{align} 
& \min\limits_{v,p,T,z} \frac{1}{2} \int_{\Omega} (\nabla \times v)^2 d x + \frac{\gamma}{2} \int_{\Gamma_c} z^2 d x \label{opt_boussinesq} \\
& s.t. \nonumber \\
& -\epsilon(\theta) \nabla^2 v + (v \cdot \nabla ) v + \nabla p + \eta(\theta) T g = 0 & \text{ in } \Omega  \nonumber \\
& \nabla \cdot v = 0 & \text{ in } \Omega  \nonumber \\
& - \kappa(\theta) \Delta T + v \cdot \nabla T = 0 & \text{ in } \Omega  \nonumber \\
&T = 0  \qquad \text{and} \qquad v = v_i & \text{on } \Gamma_i  \nonumber \\
& \kappa(\theta) \nabla T \cdot n= 0 \qquad \text{and} \qquad v=v_o & \text{on } \Gamma_o  \nonumber \\
& T = T_b(\theta)  \qquad \text{and} \qquad v=0 & \text{on } \Gamma_b  \nonumber \\
&  \kappa(\theta) \nabla T \cdot n + \nu(\theta) (z-T) = 0 \qquad \text{and} \qquad v=0 & \text{on } \Gamma_c  \nonumber 
\end{align}
where $\Omega = (0,1) \times (0,1)$, $\Gamma_i = [1/3,2/3] \times
\{1\}$, $\Gamma_o = [0,1/3] \times \{1\} \cup [2/3,1] \times \{1\}$,
$\Gamma_b = [0,1] \times \{0\}$, $\Gamma_c = \{0,1\} \times [0,1]$,
and $n$ denotes the outward pointing normal vector to the
boundary. Figure~\ref{fig:domain} depicts the domain and boundaries.

The state consists of horizontal and vertical velocities
$v=(v_1,v_2)$, the pressure $p$, and the temperature $T$; the control
$z$ is a function defined on the left and right boundaries of
$\Omega$, denote their union $\Gamma_c$. The deterministic inflow and
outflow conditions $v_i$ and $v_o$ are given by
\begin{eqnarray*}
v_i(x) = -4\left(x-\frac{1}{3}\right)\left(\frac{2}{3}-x\right)
\end{eqnarray*}
and
\begin{align*}
v_o(x) =
 \left\{\begin{array}{ll}  2\left(\frac{1}{3}-x\right)x & \mbox{ if } x \in \left[0,\frac{1}{3}\right] \vspace{.2 cm} \\
    2\left(x-\frac{2}{3}\right)\left(1-x\right)  & \mbox{ if } x \in \left[\frac{2}{3},1\right]
  \end{array} \right. .
\end{align*}

Uncertainties enter the the model through the finite dimensional
vector (coming from a spatial discretization) $\theta \in \R^m$ where
$m=2m_b + 2m_\ell + 2m_r + 3$. In particular, the boundary term $T_b$
is defined through the sum
\begin{eqnarray}
\label{T_b_series}
T_b(x,\theta) = 1+0.2 \sum\limits_{k=1}^{m_b} \left( \theta_{k}
\frac{\sin(\pi k x)}{k} + \theta_{m_b+k} \frac{\cos(\pi k x)}{k}
\right).
\end{eqnarray}
In addition, the boundary term $\nu(\theta)$, a function defined on
$\Gamma_c$, is expressed as a sum in the form of \eqref{T_b_series}
with $2m_\ell$ parameters in the sum defining the left boundary term
and $2m_r$ parameters in the sum defining the right boundary
term. Along with these $2m_b + 2m_\ell + 2m_r$ parameters which define
$T_b$ and $\nu$, there are three parameters, $\theta_{m-2},
\theta_{m-1}, \theta_m$, which appear in the uncertain scalar
quantities $\epsilon, \eta,$ and $\kappa$, as
\begin{align*}
& \epsilon = \frac{1}{Re}=\frac{1+0.05 \theta_{m-2}}{100}, \qquad \kappa = \frac{1}{RePr} = \frac{1+0.05 \theta_{m-1}}{72}\\ 
& \text{and } \eta = \frac{Ge}{Re^2} = 1+0.05\theta_m, 
\end{align*}
where $Re$, $Ge$, and $Pr$ are the Reynolds, Grashof,
and Prandtl numbers respectively. Each $\theta_k$, $k=1,2,\dots,m$, is
assumed to be independent and uniformly distributed on $[-1,1]$. The
initial iterate is taken to be zero for all samples; similar results
were found using a random initial iterate. The parameter weighting
matrix $M_p$ is the identity since the uncertain boundary conditions
$T_b$ and $\nu$ are discretized by orthogonal global basis functions.

The PDE is discretized with finite elements on a 99x99 rectangular
mesh. The velocity and pressure are represented with the Q2-Q1
Taylor-Hood finite element pair and the temperature is represented
with the Q2 finite element. We take $m_b=m_\ell=m_r=25$ which yields a
total of $m=153$ uncertain parameters. The deterministic control
problem is solved using the full space composite step algorithm in ROL
with the control penalty
$\gamma=0.01$. Figure~\ref{fig:velocity_field} displays the
uncontrolled (left) and controlled (right) velocity field with
parameters $\theta_k=0$, $k=1,2,\dots,153$. The undesired vorticities
are observed in the uncontrolled velocity field and are reduced by the
control strategy.

\begin{figure}
\centering
  \includegraphics[width=0.4\textwidth]{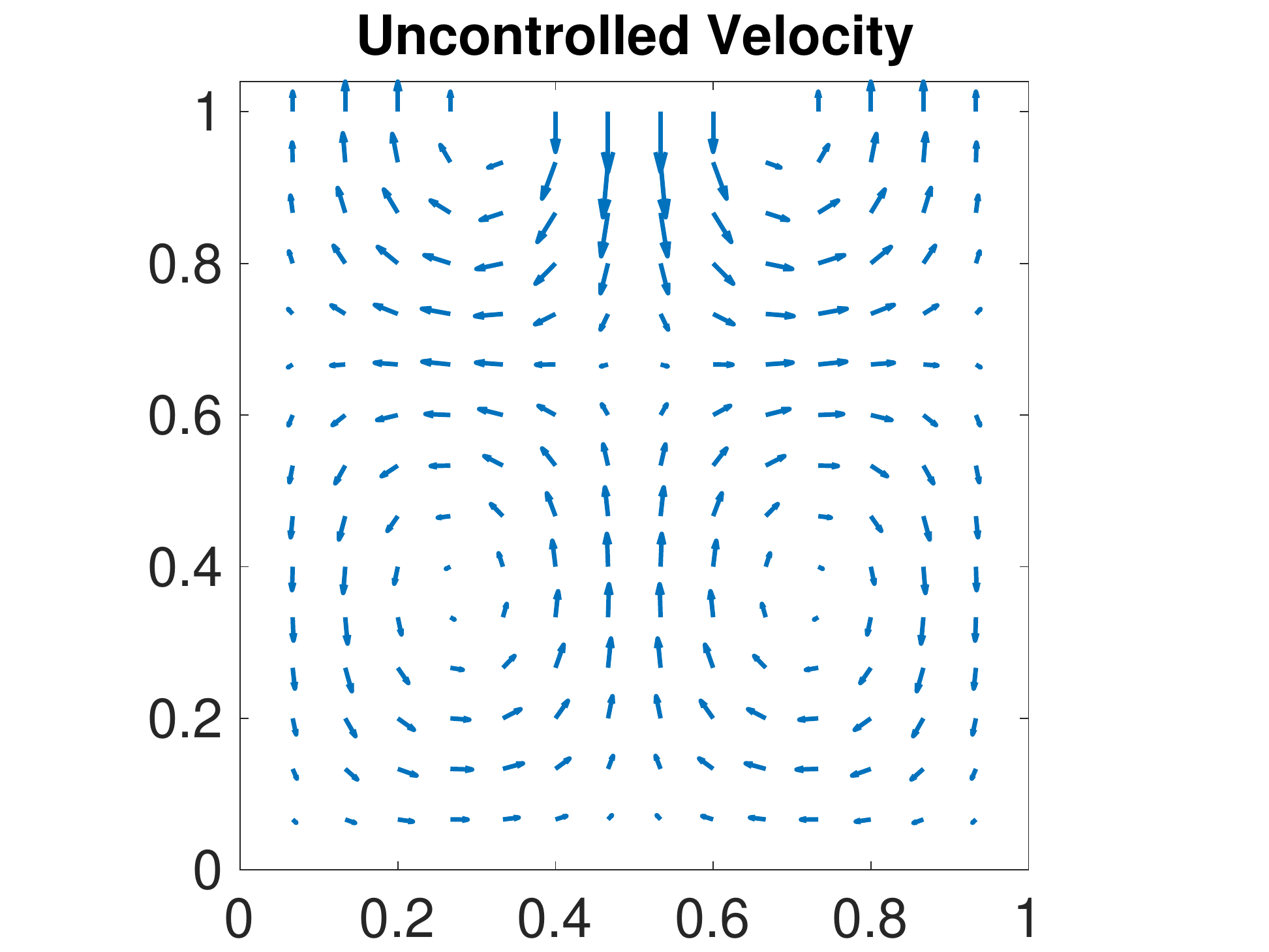}
    \includegraphics[width=0.4\textwidth]{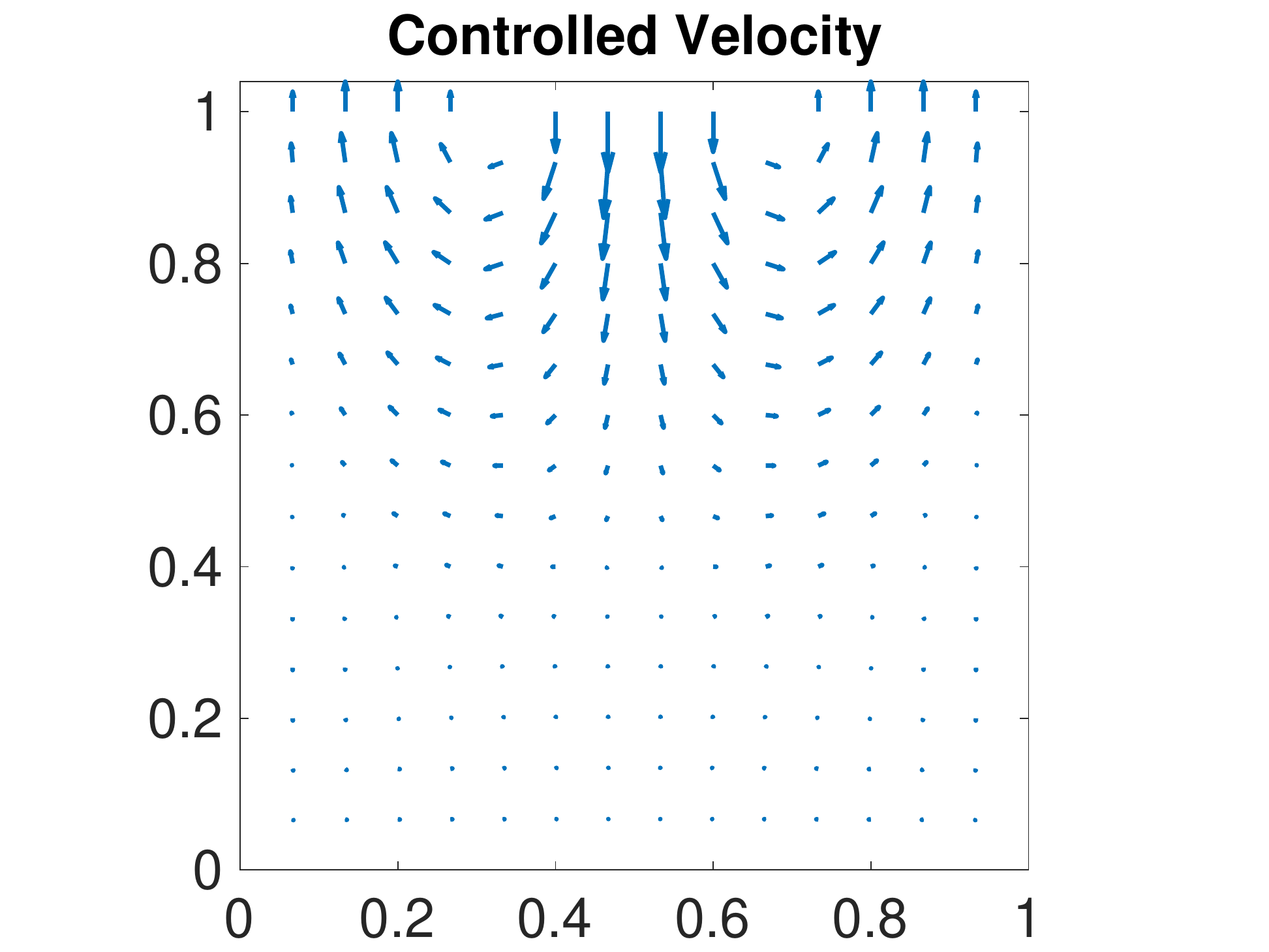}
  \caption{Uncontrolled (left) and controlled (right) velocity field.}
  \label{fig:velocity_field}
\end{figure}

Local sensitivities are evaluated at $N=20$ samples from parameter
space. Figure~\ref{fig:control_solutions} displays the optimal control
solutions for these 20 samples. The left and right panels display the
controller on the left and right boundary, respectively. Each curve
corresponds to the control solution for a different parameter
sample. There is significant variability in the solutions which
indicates a strong dependence of the controller on the uncertain
parameters. Our objective in the hyper-differential sensitivity
analysis is to determine which parameters cause the greatest changes
in the controller so that uncertainty quantification and robust
optimization may focus on them rather than the full set of 153
parameters.

\begin{figure}
\centering
  \includegraphics[width=0.4\textwidth]{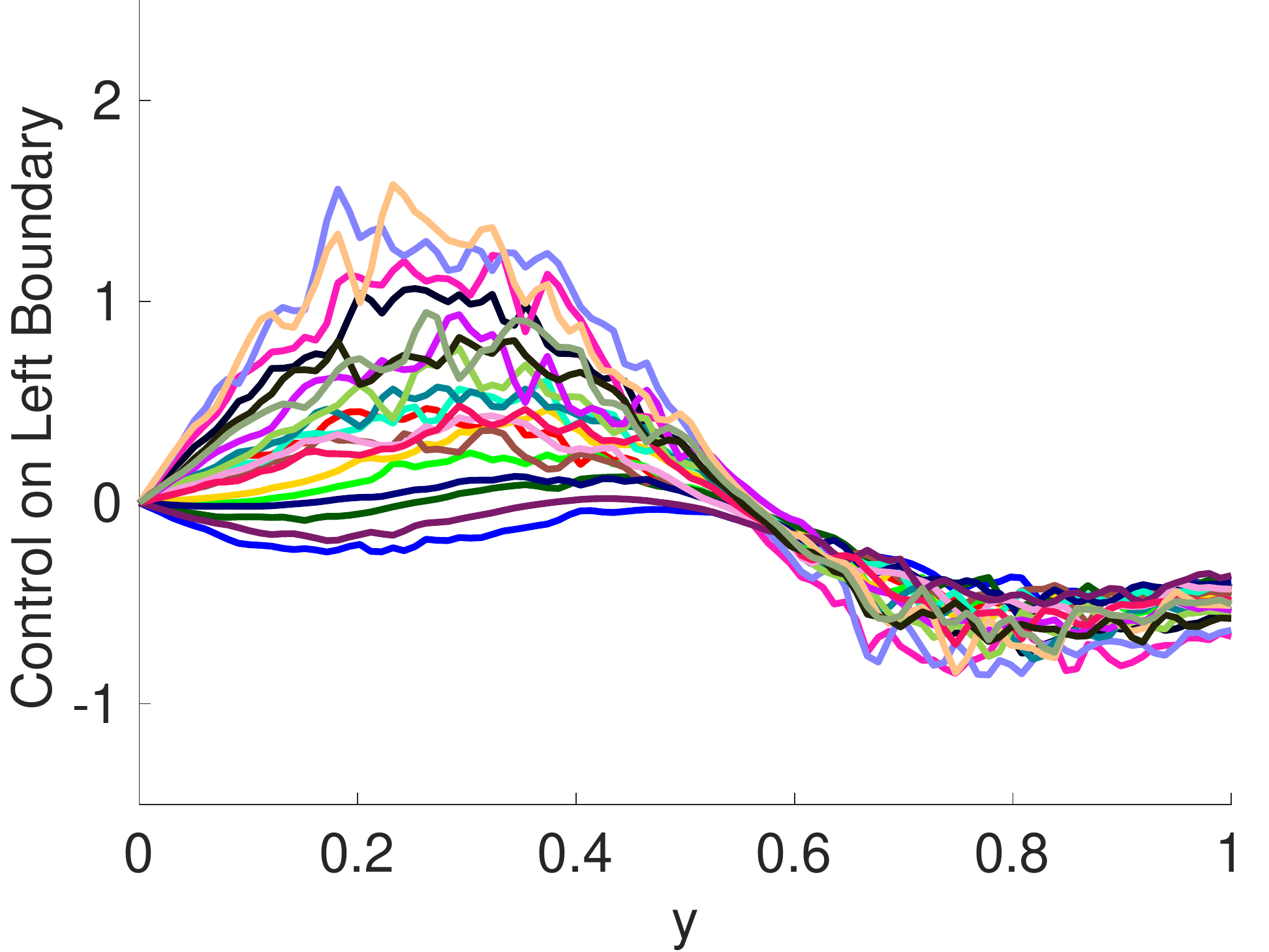}
  \includegraphics[width=0.4\textwidth]{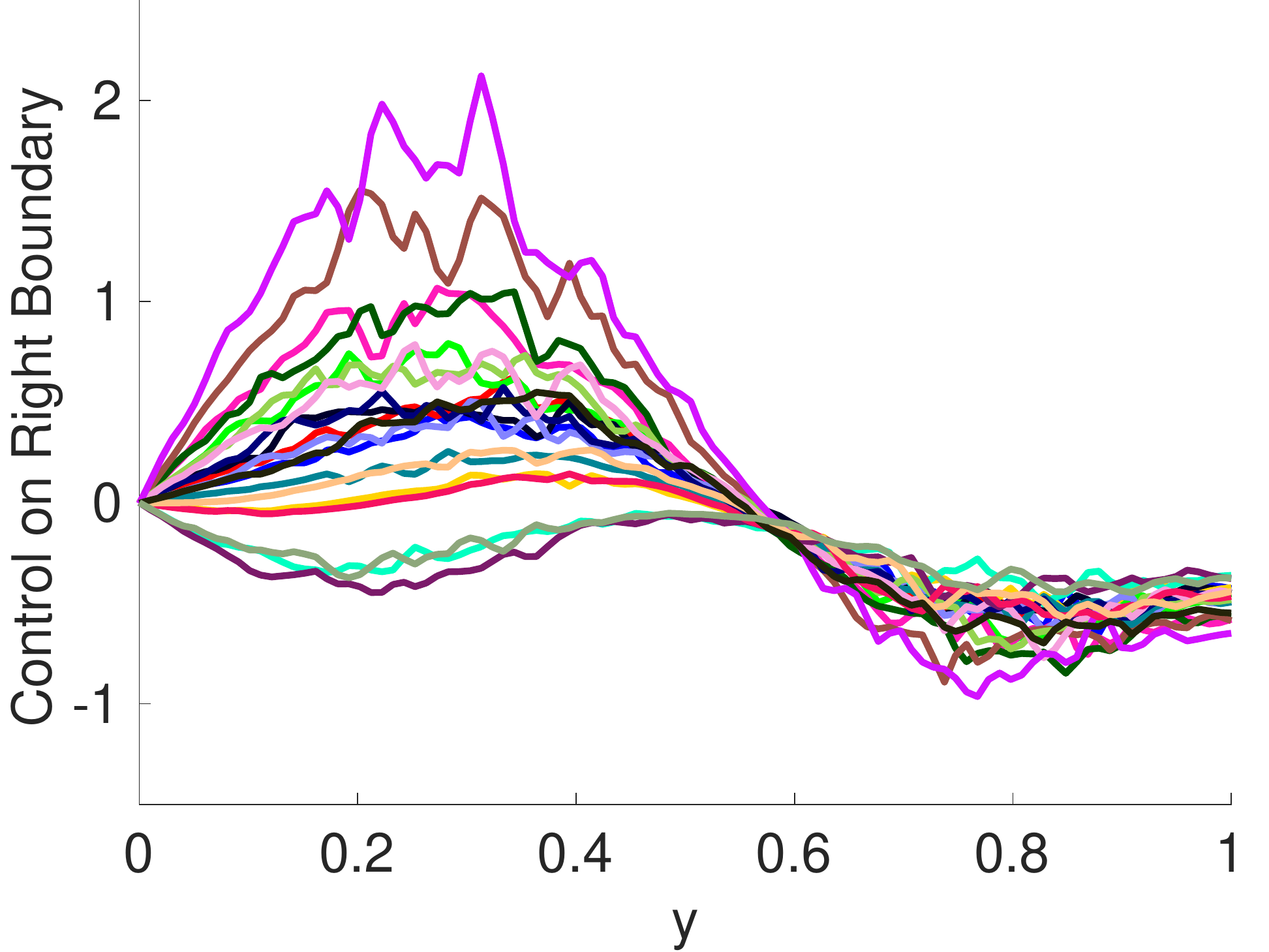}
  \caption{Control solutions for \eqref{opt_boussinesq} corresponding
    to 20 different parameter samples (each parameter from an independent uniform distribution on $[-1,1]$). The left and right panels are
    the controllers on the left and right boundaries,
    respectively. Each curve is a control solution for a given
    parameter sample.}
  \label{fig:control_solutions}
\end{figure}

We compute the leading $K=4$ singular triples of \eqref{sen_operator}
and follow the approach presented in Section~\ref{sec:postprocessing}
to analyze them. An oversampling factor of $L=8$ is
used. Figure~\ref{fig:singular_values} shows the leading 4 singular
values from each of the 20 parameter samples. Each vertical slice in
Figure~\ref{fig:singular_values} gives the 4 singular values for the
fixed parameter sample. We observe that there are 2 dominant singular
triples (thus validating that $K=4$ is sufficiently large) and that
the singular values do not vary significantly over the different
parameter samples.

\begin{figure}
\centering
  \includegraphics[width=0.4\textwidth]{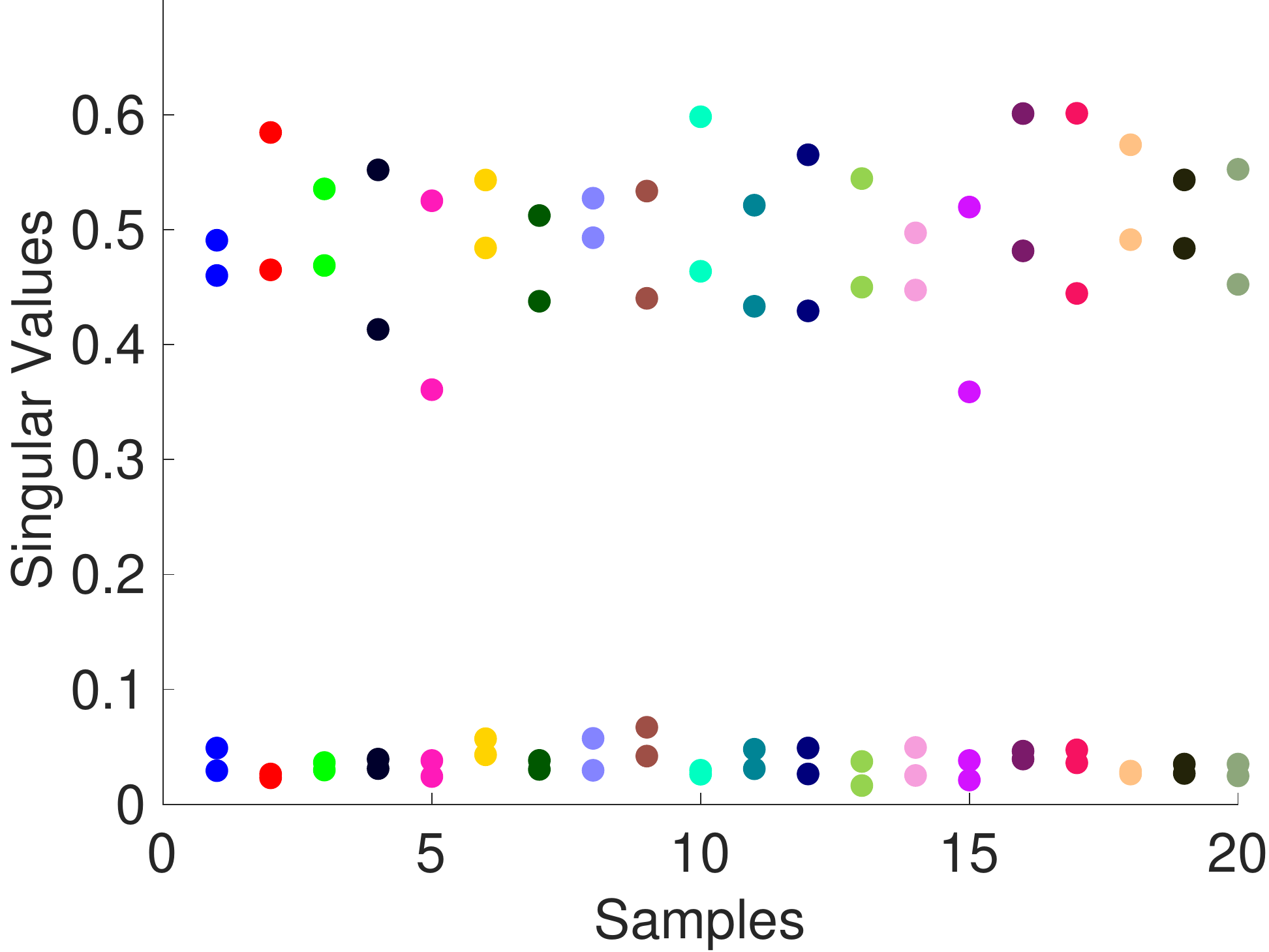}
  \caption{Leading 4 singular values of \eqref{sen_operator} at 20
    different parameter samples. Each vertical slice corresponds to
    the leading 4 singular values for a fixed sample.}
  \label{fig:singular_values}
\end{figure}

The sensitivity indices \eqref{sensitivity_index} and displayed in
Figure~\ref{fig:parameter_sensitivities}. There are 20 circles in each
vertical slice of Figure~\ref{fig:parameter_sensitivities}
corresponding to the local sensitivity index for a fixed parameter
over the 20 samples. We observe several interesting features:
\begin{enumerate}
\item[$\bullet$] The local sensitivity analysis yields similar results for each parameter sample.
\item[$\bullet$] Only around $10\%$ of the uncertain parameters exhibit significant influence on the control strategy.
\item[$\bullet$] The bottom boundary condition, $T_b$, has the greatest influence on the control strategy.
\item[$\bullet$] The cosine components of $T_b$ are more important in the first two frequencies, the sine component is more important in the third frequency.
\end{enumerate}

In the language of Theorem~\ref{sensitivity_opt_thm}, we may postulate that the Lipschitz constant (a measure of nonlinearity for the parameter to optimal solution mapping) $Q$ is ``small" in this problem because the local sensitivities did not vary significantly, albeit, we cannot compute $Q$. Coupling Theorem~\ref{sensitivity_opt_thm} and Figure~\ref{fig:parameter_sensitivities} provides some level of confidence that averaging the 20 local sensitivity indices gives a reasonable estimate of the global sensitivity indices and the statistical characteristics of the optimal solution as the parameters vary.

\begin{figure}
\centering
  \includegraphics[width=0.4\textwidth]{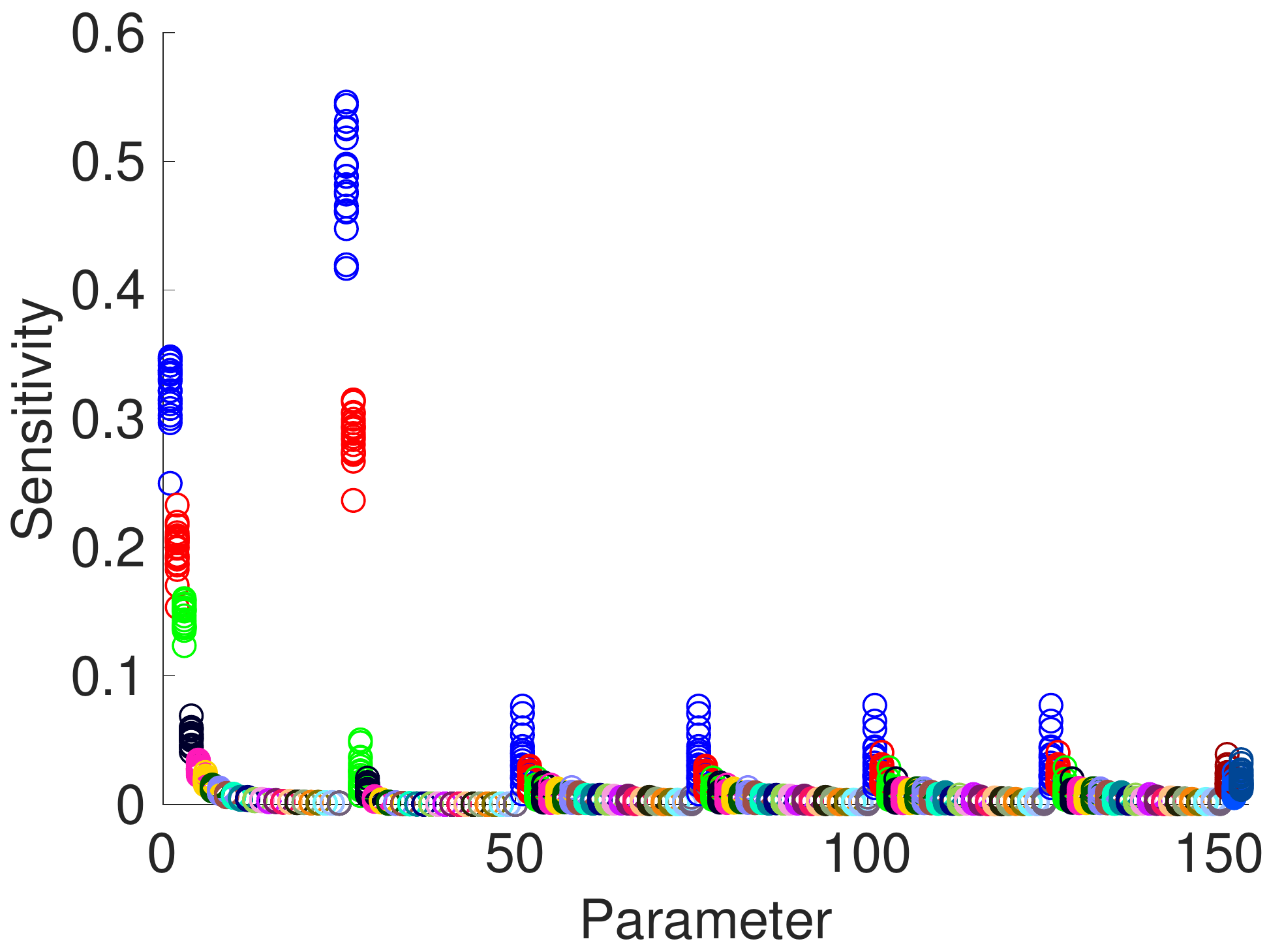}
  \caption{Local sensitivities \eqref{sensitivity_index} for the 153
    uncertain parameters in \eqref{opt_boussinesq}. The 20 circles in
    each vertical slice indicates the sensitivity index for a fixed
    parameter as it varies over the 20 parameter samples. The
    repeating color scheme indicates the grouping of parameters as
    they correspond to sine and cosine components of each boundary
    condition.}
  \label{fig:parameter_sensitivities}
\end{figure}

\begin{figure}[h]
\centering
  \includegraphics[width=0.4\textwidth]{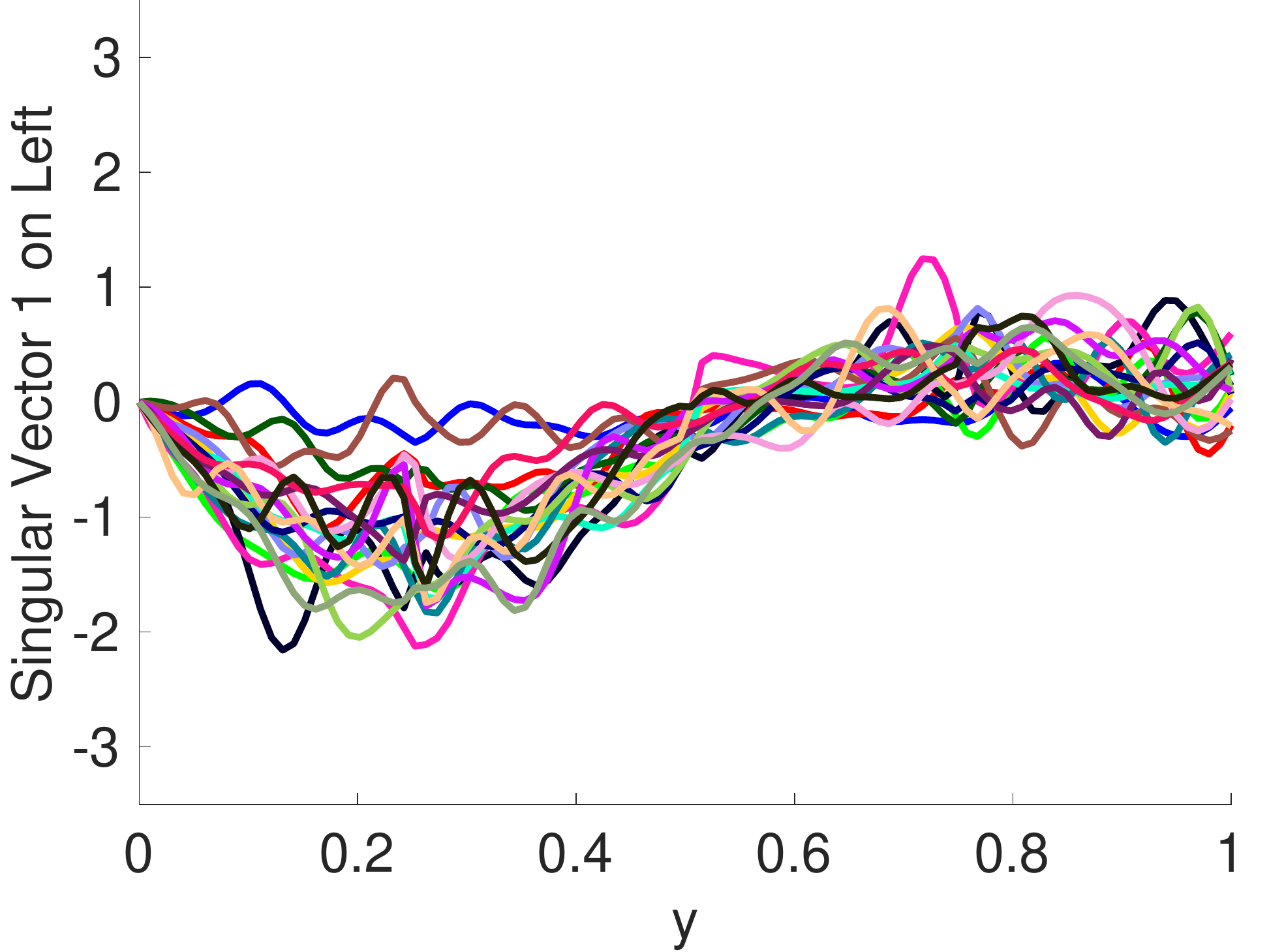}
      \includegraphics[width=0.4\textwidth]{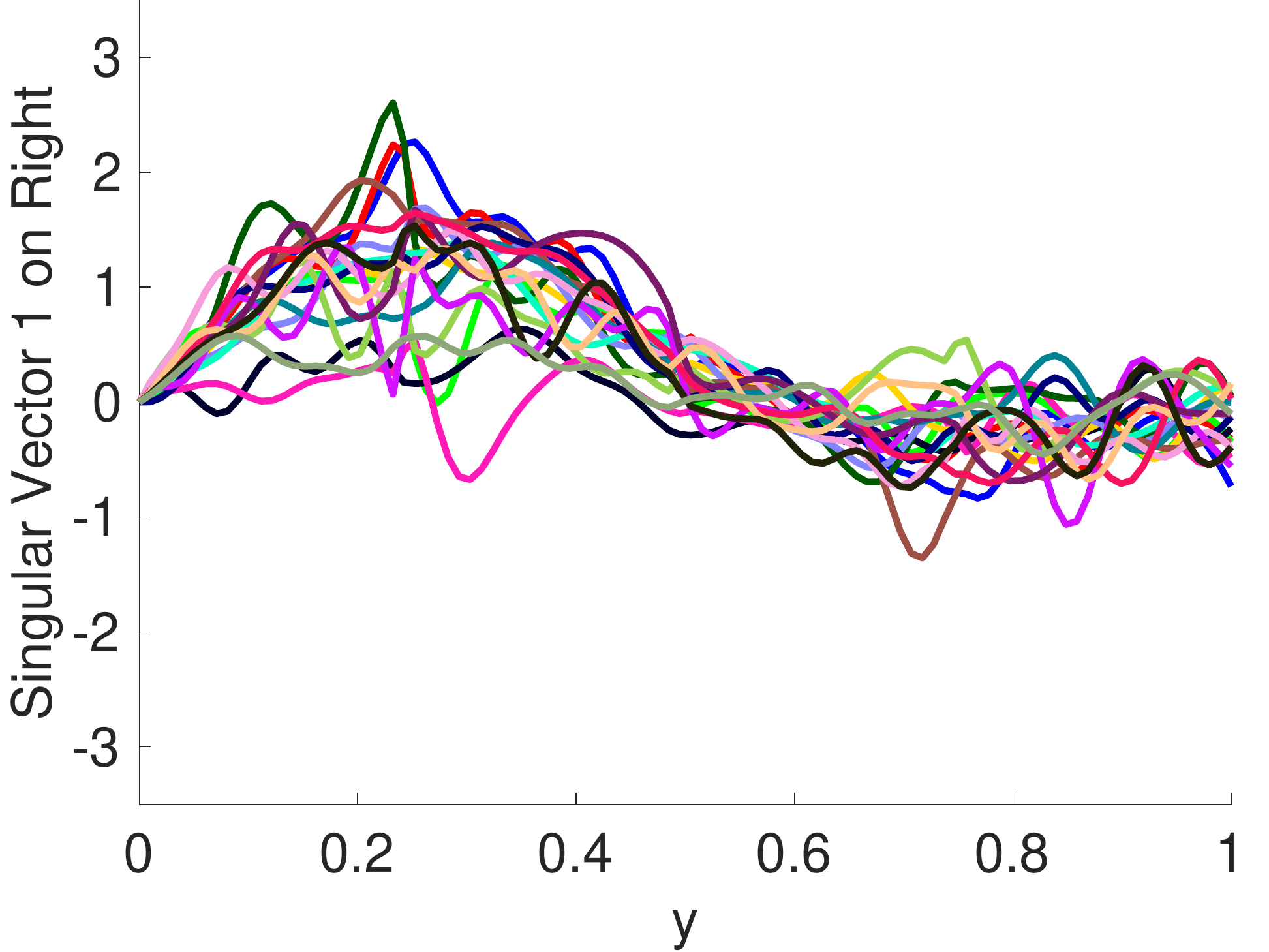}\\
  \includegraphics[width=0.4\textwidth]{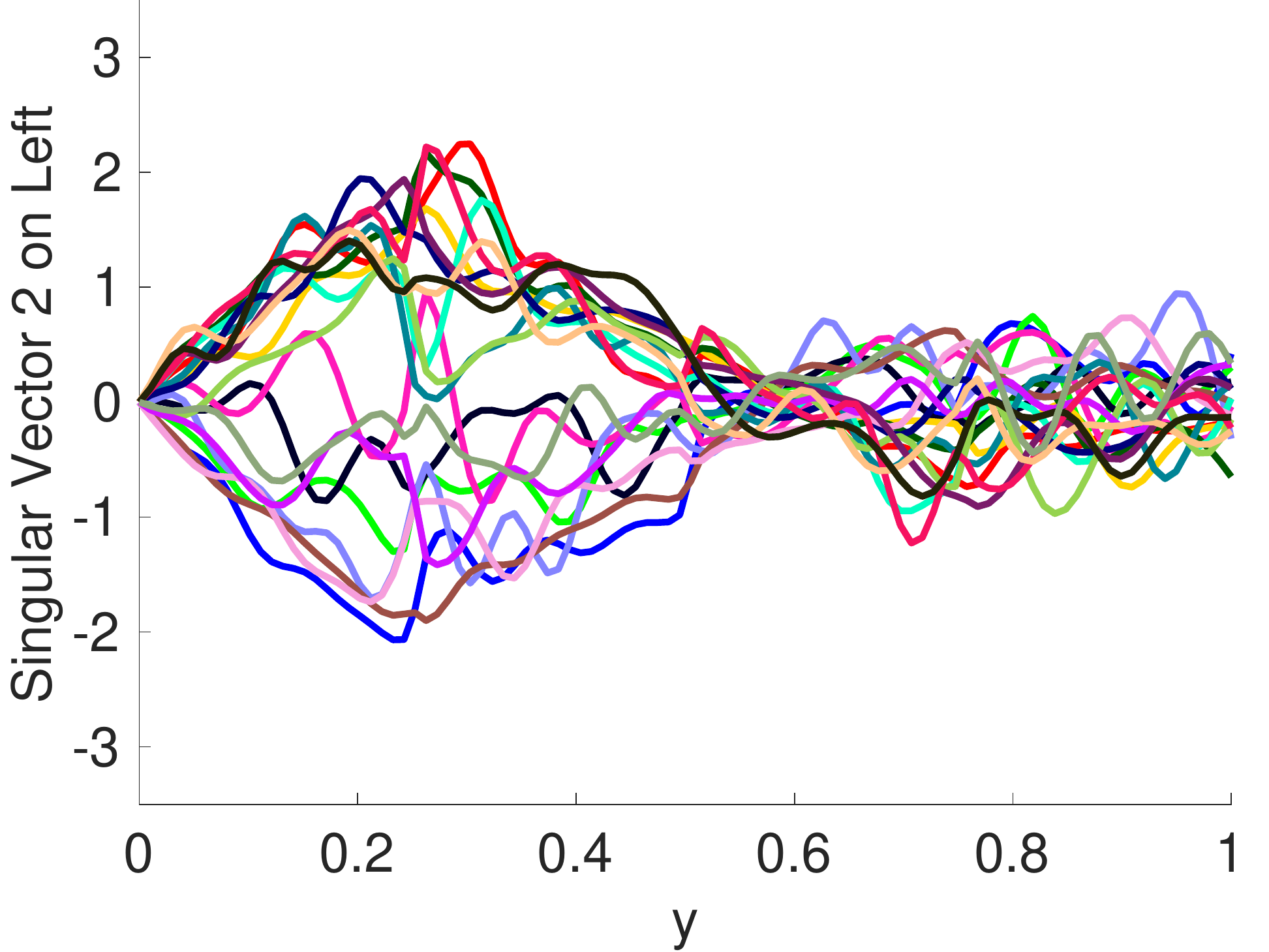} 
  \includegraphics[width=0.4\textwidth]{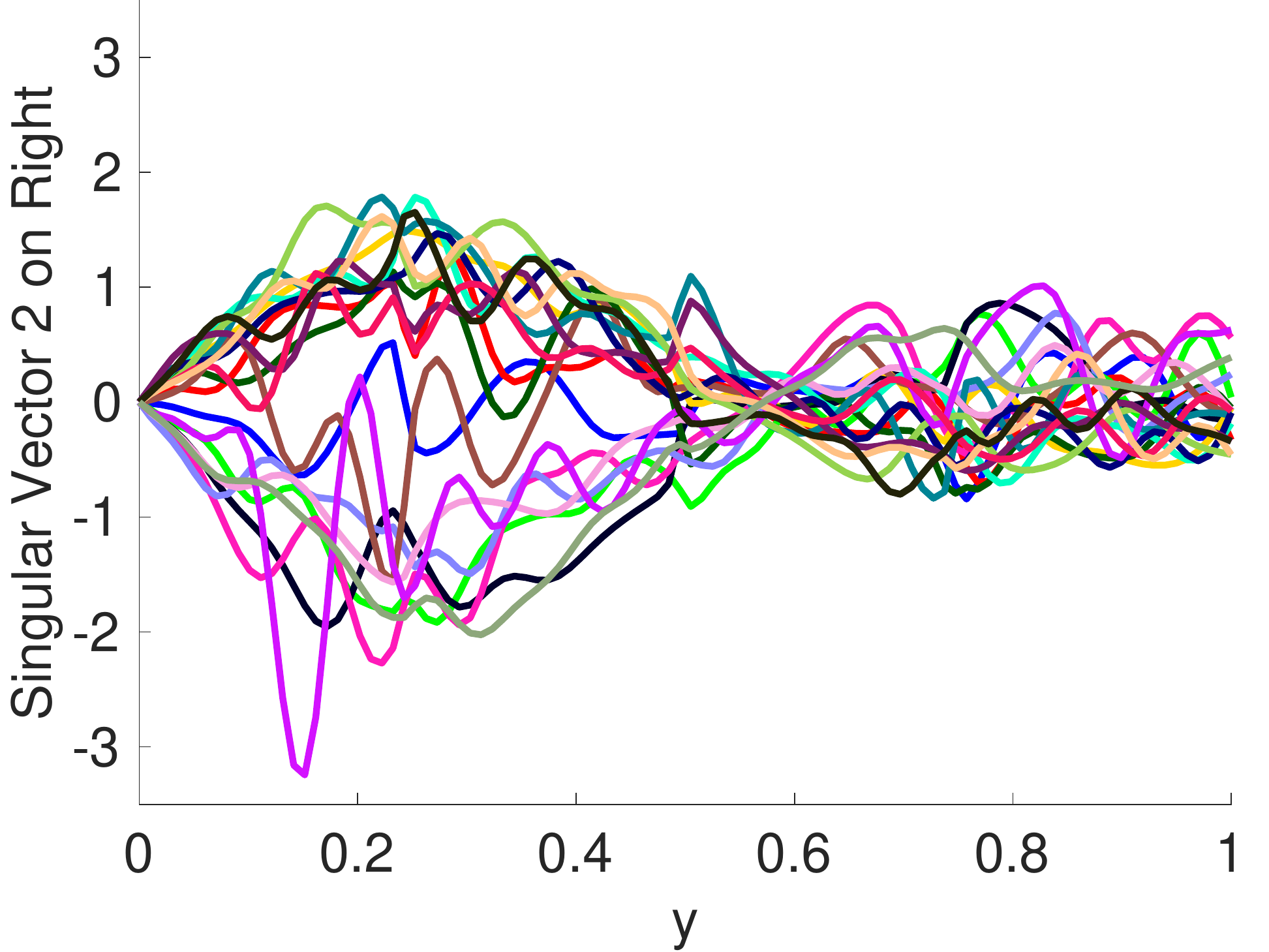} \\
  \caption{First two singular vectors $z_k$, $k=1,2,$ of \eqref{sen_operator}, i.e. $\D \theta_k = \sigma_k z_k$. The top (bottom) row shows the first (second) singular vector on the left and right boundaries, respectively. Each curve corresponds to a different parameter sample.}
  \label{fig:control_sensitivities}
\end{figure}

To complement these parameter sensitivities,
Figure~\ref{fig:control_sensitivities} displays the singular vectors,
$z_k$, $k=1,2,$ corresponding to the leading parameter perturbations
$\theta_k$, $k=1,2$. The top left and top right panels of
Figure~\ref{fig:control_sensitivities} show the first singular vector
on the left and right boundaries, respectively; the bottom left and
bottom right panels are the second singular vector on the left and
right boundaries, respectively. The singular vectors $z_1$ and $z_2$
may be interpreted as the change in the control strategy if the
parameters are perturbed according to the singular vectors $\theta_1$
and $\theta_2$, respectively. We observe that the control strategy
will change more near the bottom of the domain, an unsurprising result
since the greatest sensitivity is in the bottom boundary condition.

The computation was performed using 80 compute nodes, each containing
16 processors. By taking 20 parameter samples and 16 random vectors in
the eigenvalue solver, the embarrassingly parallel loop in the
eigenvalue solver distributed the computation over all 1280
processors, using 4 processors in parallel for each matrix vector
product. This reduces the overall execution time to approximately one
matrix vector product in Line 5 and one matrix vector product in Line
9 of Algorithm~\ref{alg:hyperdiff}, each leveraging parallel linear
algebra with 4 processors. Since the KKT solve \eqref{KKT_solve}
dominates the computational cost, the total execution time for
computing 20 local sensitivities is approximately equal to 4 KKT
solves.

\subsection{Subsurface Contaminant Source Inversion}
In this subsection we consider an inverse problem constrained by the
transient advection diffusion equation and Darcy's equation in two
spatial dimensions. This emulates a model for subsurface contaminant
transport. We seek to invert for a contaminant source given sparse
noisy measurements of the contaminant. Consider the following inverse
formulation:
\begin{align} 
& \min\limits_{p,c,z} \frac{1}{2} \sum\limits_{i=1}^T \sum\limits_{j=1}^M (\mathcal P_{i,j} c-d_{i,j})^2 + \frac{\alpha}{2} \int_{\Omega} z^2 dx \label{source_inv} \\
& s.t. \nonumber \\
& \nabla \cdot (-\kappa(\theta) \nabla p) = 0 & \text{ in } \Omega \nonumber \\
& \frac{\partial c}{\partial t} + \nabla \cdot (-\epsilon(\theta) \nabla c) + (-\kappa(\theta) \nabla p) \cdot \nabla c = \chi_{[.01,.02]}(t)z & \text{ in } \Omega \text{ for } t>0  \nonumber \\
& p=\psi(\theta) & \text{on } \Gamma_L \cup \Gamma_R  \nonumber \\
& -\kappa(\theta) \nabla p \cdot n =0 & \text{on } \Gamma_B \cup \Gamma_T \nonumber \\
& \nabla c \cdot n =0 & \text{on } \Gamma \text{ for } t>0  \nonumber \\
& u =0 & \text{in } \Omega \text{ for } t=0
\end{align}
where $\Omega=(0,1)^2$ with boundary $\Gamma=\Gamma_L \cup \Gamma_B
\cup \Gamma_R \cup \Gamma_T$, $\Gamma_L=\{0\} \times (0,1)$,
$\Gamma_B=(0,1) \times \{0\}$, $\Gamma_R=\{1\} \times (0,1)$,
$\Gamma_T=(0,1)\times \{1\}$, and $n$ denotes the outward pointing
normal vector to the boundary.

The state consists of the transient contaminant concentration $c$ and
the steady state pressure $p$; the stationary source $z$ is a function
defined on $\Omega$ which appears on the right hand side of the
advection diffusion equation along with the characteristic function
$\chi_{[.01,.02]}(t)$ which equals $1$ on the time interval
$[.01,.02]$ and $0$ otherwise. Contaminant concentration data is
collected at $M=121$ sensors which are uniformly distributed on an
$11\times11$ grid in $[0,1]^2$ and $T=40$ instances in time. Synthetic
data $d_{i,j}$ (at the $i^{th}$ time instance and $j^{th}$ sensor
location) is generated by solving the governing PDEs (with double the
mesh resolution used in the inverse problem) with a specified source
term (given in Figure~\ref{fig:source}) and adding independent
Gaussian noise whose standard deviation is $3\%$ of the observed
concentration. The observation operator $\mathcal P_{i,j}$ maps the
PDE solution to the contaminant concentration at the $i^{th}$ time
instance and $j^{th}$ sensor location.

Uncertainties enter the the model through the finite dimensional
vector (coming from a spatial discretization) $\theta \in \R^m$ where
$m=256+1+16+16=289$ which parameterizes perturbations of the scalar
and spatially dependent parameters in the PDE, in particular, the
permeability field $\kappa$, left and right Dirichlet boundary
condition $\psi$, and diffusion coefficient $\epsilon$. These
parameters are fixed to nominal values in order to solve the inverse
problem and we consider the sensitivity of the source estimate to
perturbations of the parameters.

The nominal value of the permeability field, denote it by
$\overline{\kappa}$, is shown in Figure~\ref{fig:pressure} alongside
the Darcy pressure generated by solving Darcy's equation with the
nominal permeability field $\overline{\kappa}$ and nominal boundary
condition defined by
\begin{align*}
\overline{\psi}(x,y) =
 \left\{\begin{array}{ll}  10+2\cos(2 \pi y) & \mbox{ if } x \in \Gamma_L \vspace{.2 cm} \\
   12 + \cos(2 \pi y) + .5 \cos(4 \pi y)  & \mbox{ if } x \in \Gamma_R
  \end{array} \right. .
\end{align*}
The nominal scalar diffusion coefficient in the advection diffusion
equation is $\overline{\epsilon} = 1$.

To study the sensitivity of the source estimate to uncertainties in
the permeability field, boundary condition, and diffusion coefficient,
we represent them as
\begin{align*}
\kappa &= \overline{\kappa}\left(1 + a \sum\limits_{k=1}^{(L+1)^2} \theta_k \phi_k \right) \\
\psi &= \overline{\psi}\left(1 + a \sum\limits_{k=1}^{2(L+1)} \theta_{(L+1)^2+k} \eta_k\right)\\
\epsilon &= \overline{\epsilon}\left(1 + a \theta_{(L+1)^2 + 2(L+1) + 1}\right)
\end{align*}
where $a=0.2$ represents the level of uncertainty ($20\%$), $L=15$ is
an integer specifying a spatial discretization, and $\phi_k$ and
$\eta_k$ are linear finite element basis functions defined on a
rectangular mesh with $L+1$ equally spaced nodes in each spatial
dimension ($\phi_k$ is defined on $[0,1]^2$ and $\eta_k$ is defined on
$[0,1]$). The parameter weighting matrix $M_p$ is block diagonal with
its blocks given by the finite element mass matrices arising from
discretization of $\kappa$ and $\psi$, and the scalar identity for the
$1\times 1$ block corresponding to $\epsilon$.

The advection-diffusion equation is discretized on the time mesh
$t_i=\frac{.11i}{44}$, $i=0,2,\dots,44$, and solved using backward
Euler time stepping. Both the pressure and contaminant concentration
are spatially discretized with a 2,601 degrees of freedom linear
finite element approximation on a rectangular grid. The spatially
dependent parameters are discretized on a coarser mesh (256 degrees of
freedom) to enforce smoothness of the perturbations. The data
$d_{i,j}$ is generated by solving on a mesh with 10,201 degrees of
freedom. A regularization coefficient $\alpha=0.0005$ is used and the
optimization problem is solved using a truncated CG trust region
solver in ROL.

\begin{figure}
    \centering
    \includegraphics[width=0.4\textwidth]{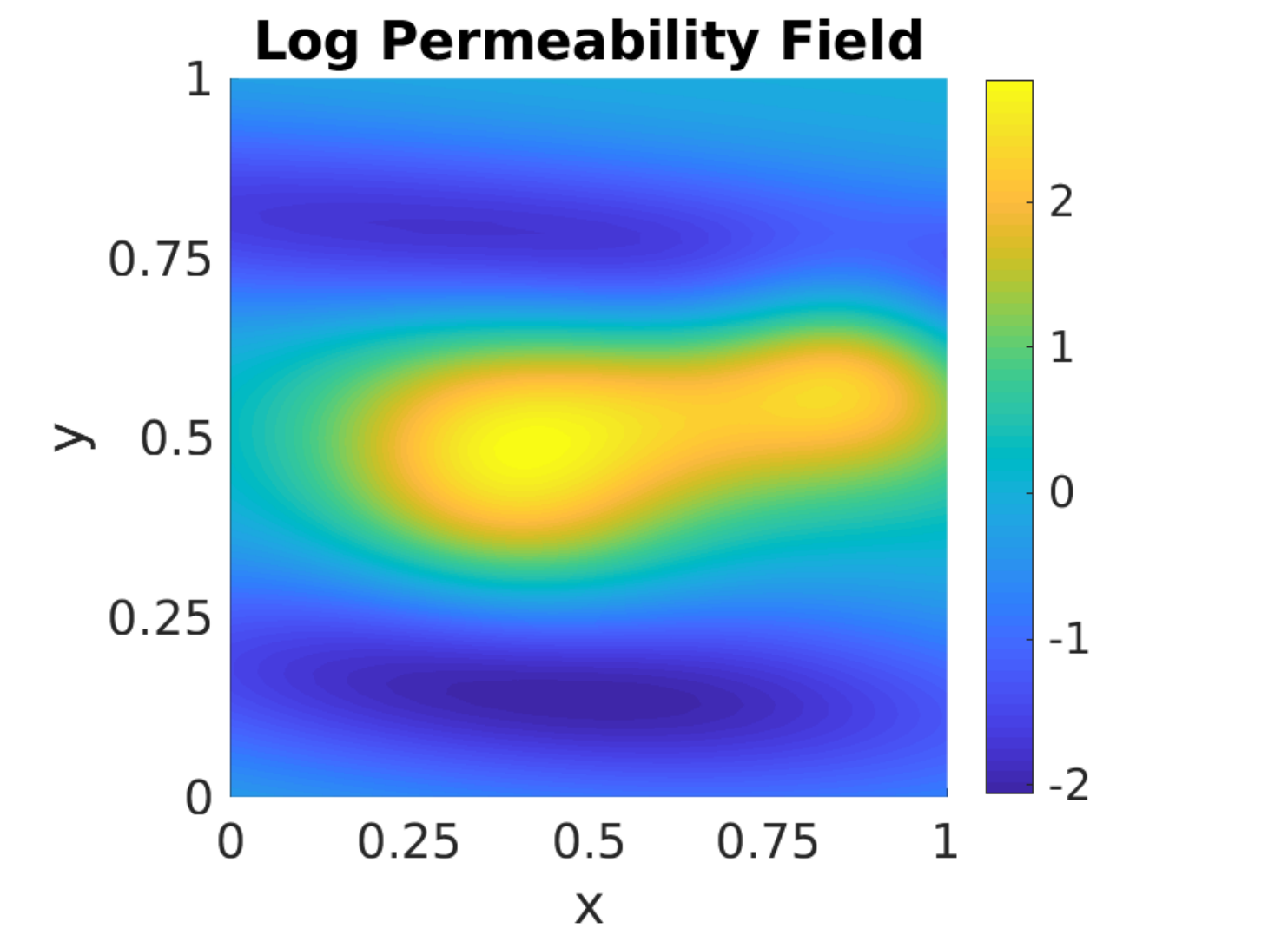}
    \includegraphics[width=0.4\textwidth]{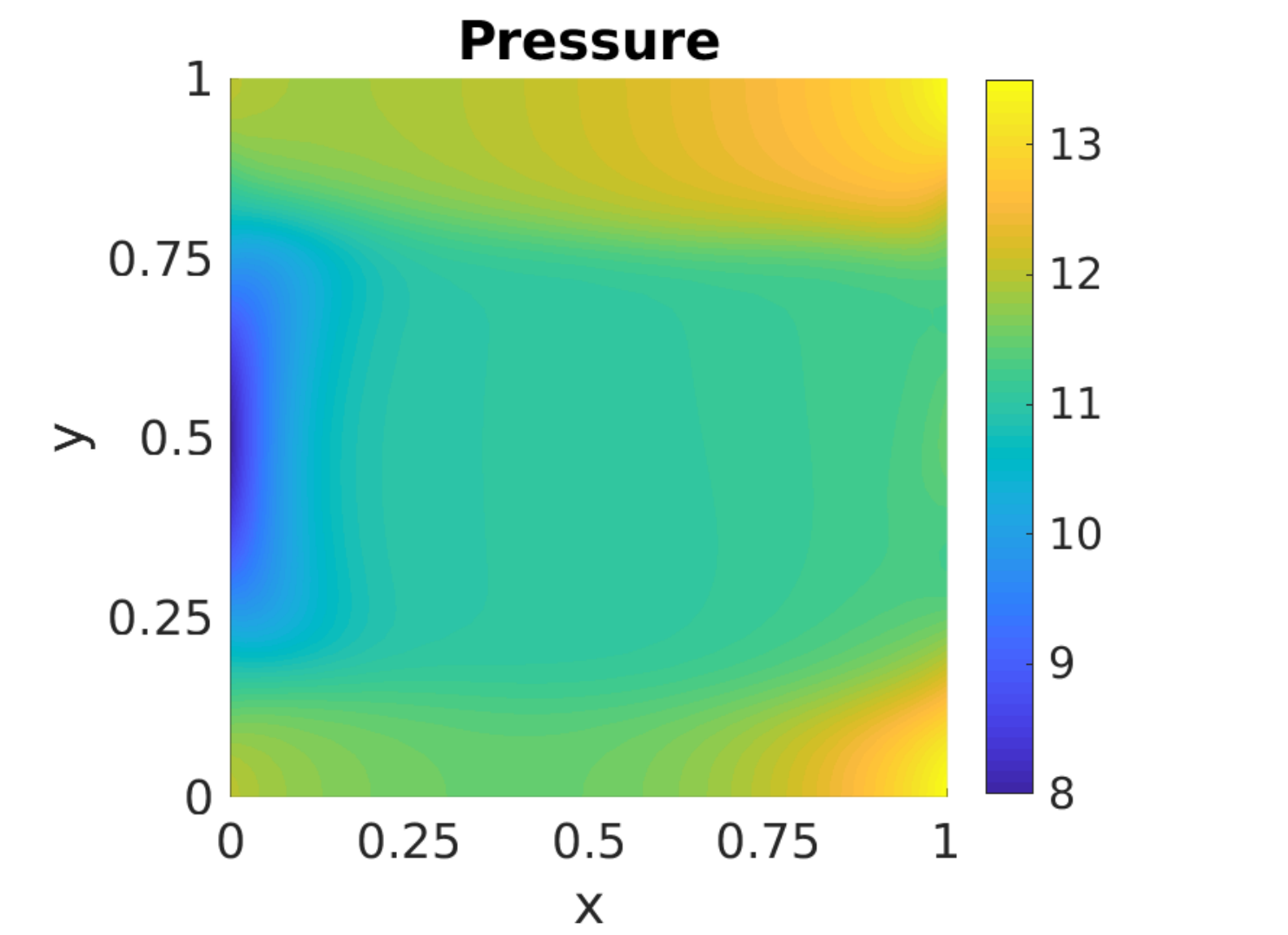}
    \caption{Left: the log (base 10) of the nominal permeability field
      $\overline{\kappa}$; right: the solution of the pressure
      equation with nominal parameters $\overline{\kappa}$ and
      $\overline{\psi}$.}
    \label{fig:pressure}
\end{figure}

The contaminant source used to generate synthetic data is given in the
left panel of Figure~\ref{fig:source} and the source estimated by
solving the inverse problem is given in the right panel of
Figure~\ref{fig:source}. The location and magnitude of the source is
recovered well, albeit, the estimate is noisy as a result of the
sparse noisy data and uninformed regularization.

\begin{figure}
    \centering
    \includegraphics[width=0.4\textwidth]{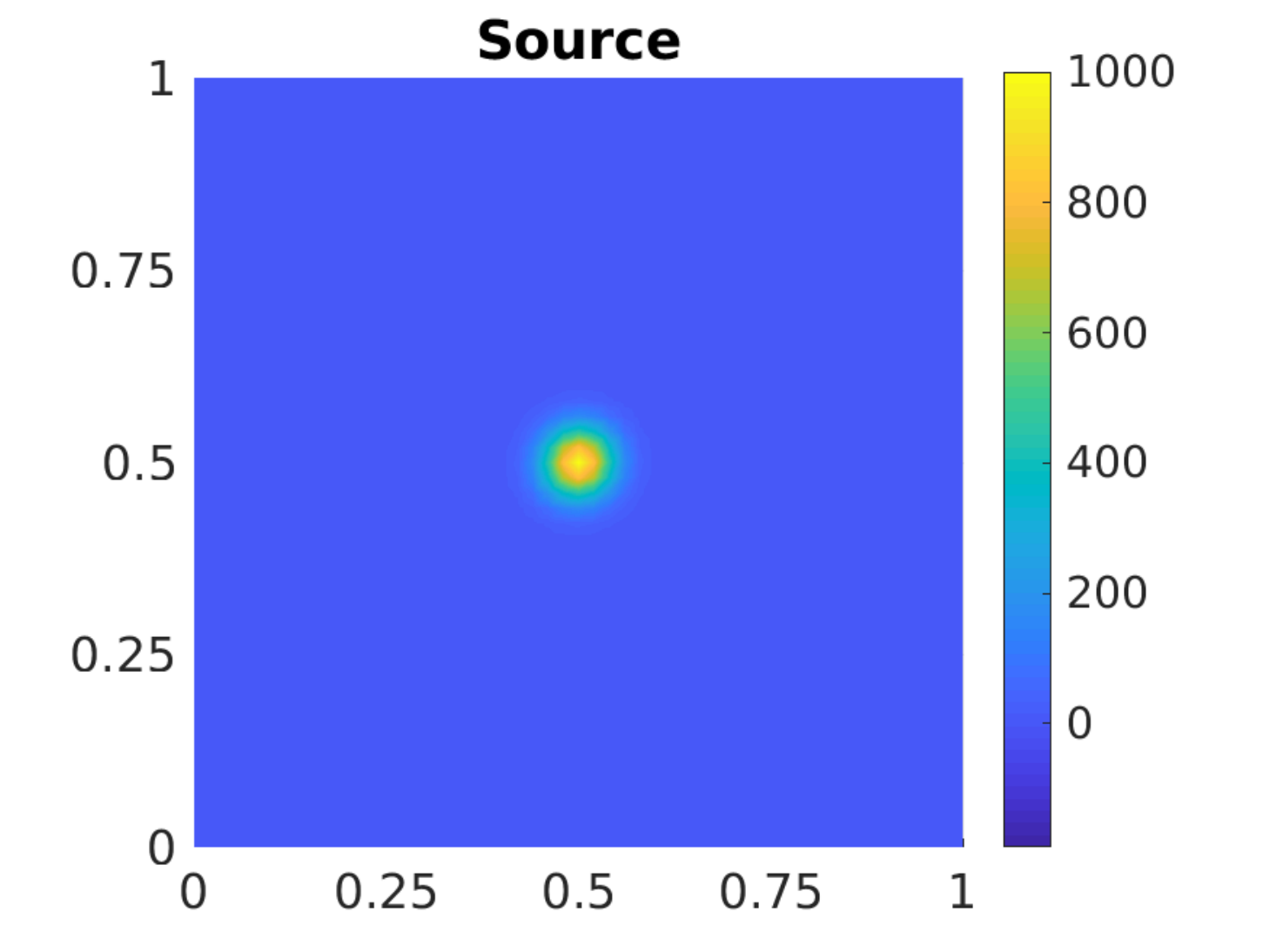}
    \includegraphics[width=0.4\textwidth]{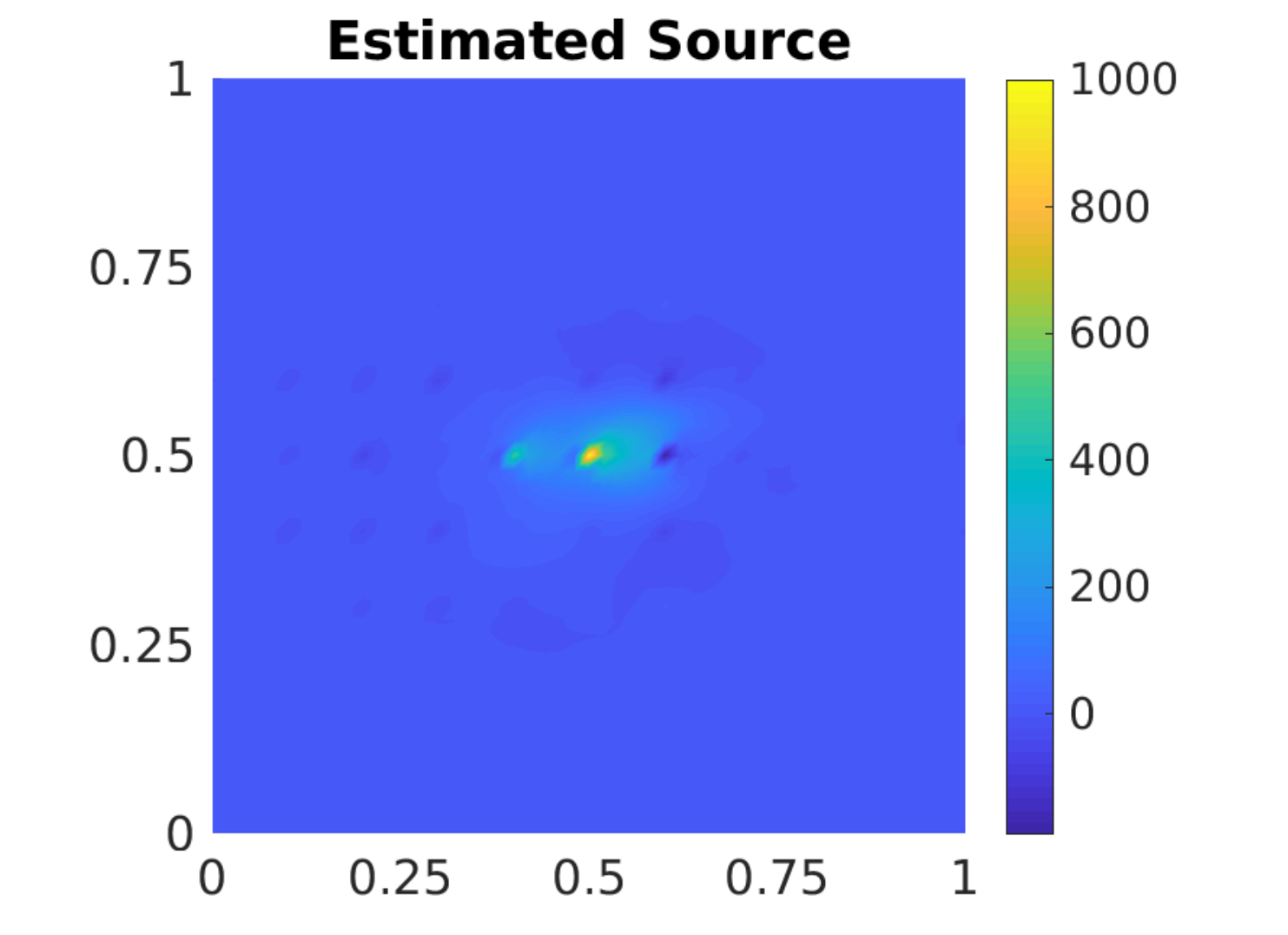}
    \caption{Left: the ``true" source term used to generate synthetic data; right: the estimated source term computed by solving \eqref{source_inv} with sparse noisy data.}
    \label{fig:source}
\end{figure}

The time evolution of the contaminant concentration is given in
Figure~\ref{fig:contaminant} where three time instances are shown
($t=0.02$, $t=0.06$, and $t=0.10$) from left to right. The observed
noisy data is given on the top row and the estimated contaminant
concentration (using the estimated source) is given on the bottom row.

\begin{figure}
    \centering
    \includegraphics[width=0.32\textwidth]{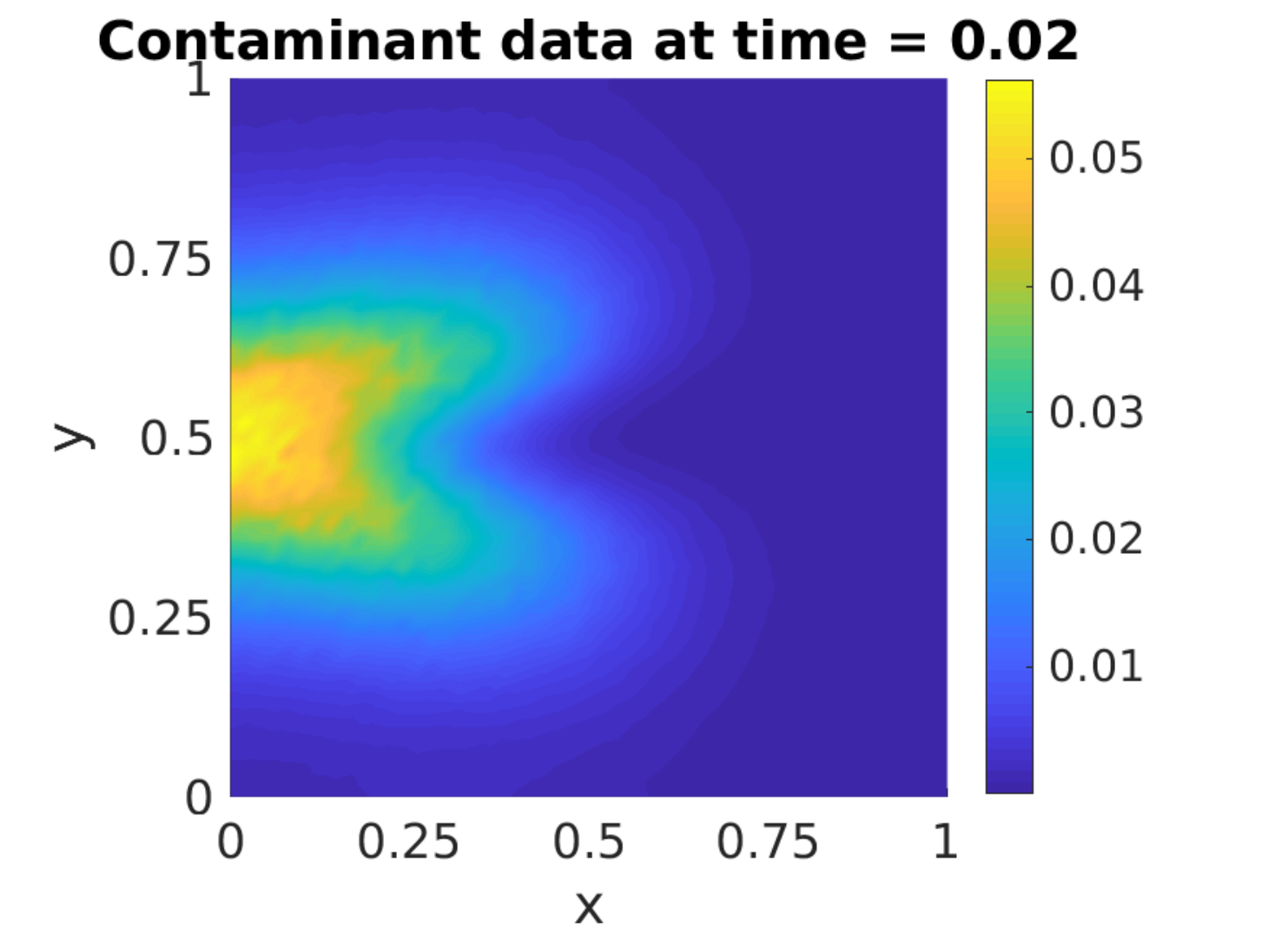}
    \includegraphics[width=0.32\textwidth]{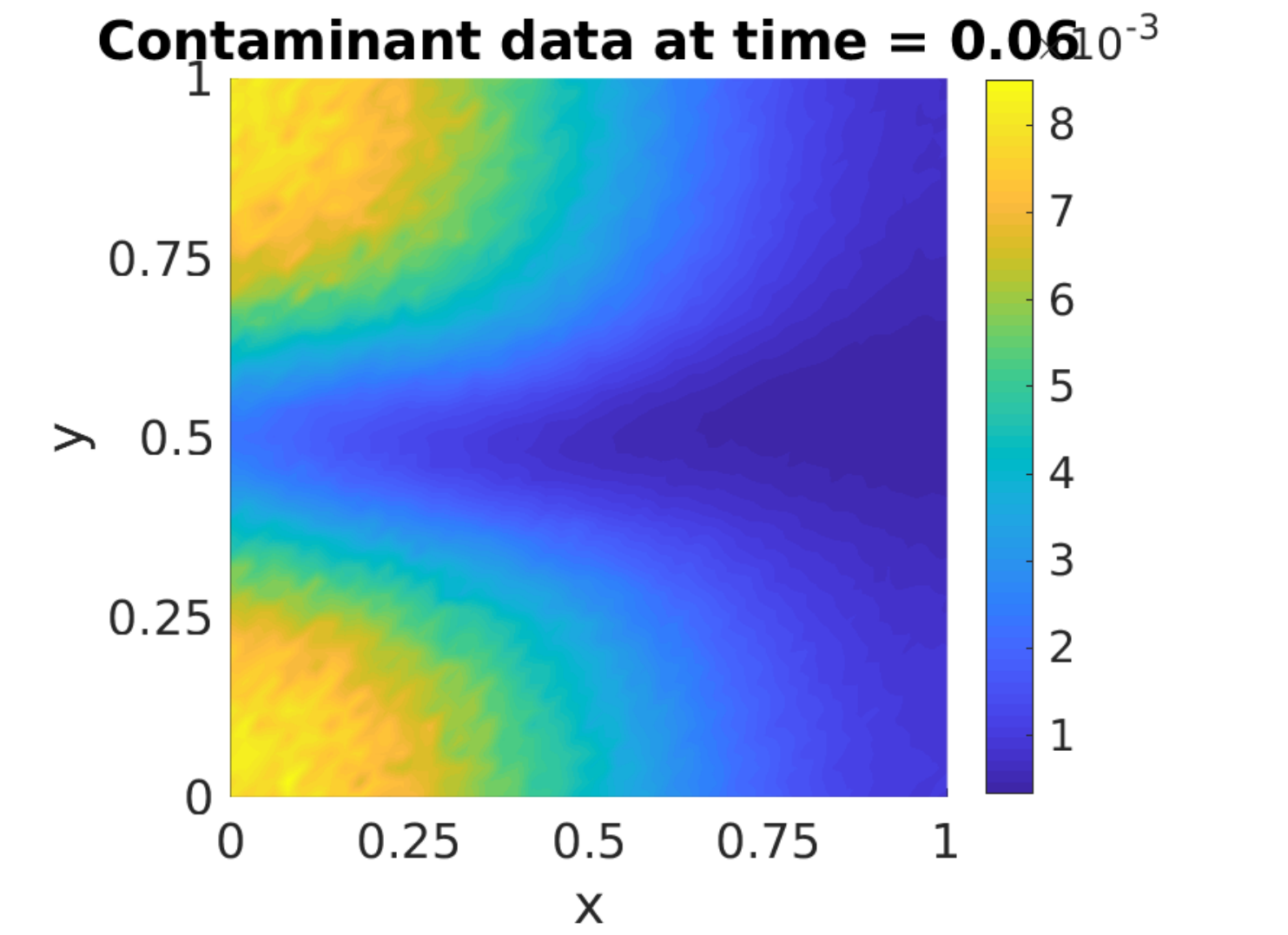}
    \includegraphics[width=0.32\textwidth]{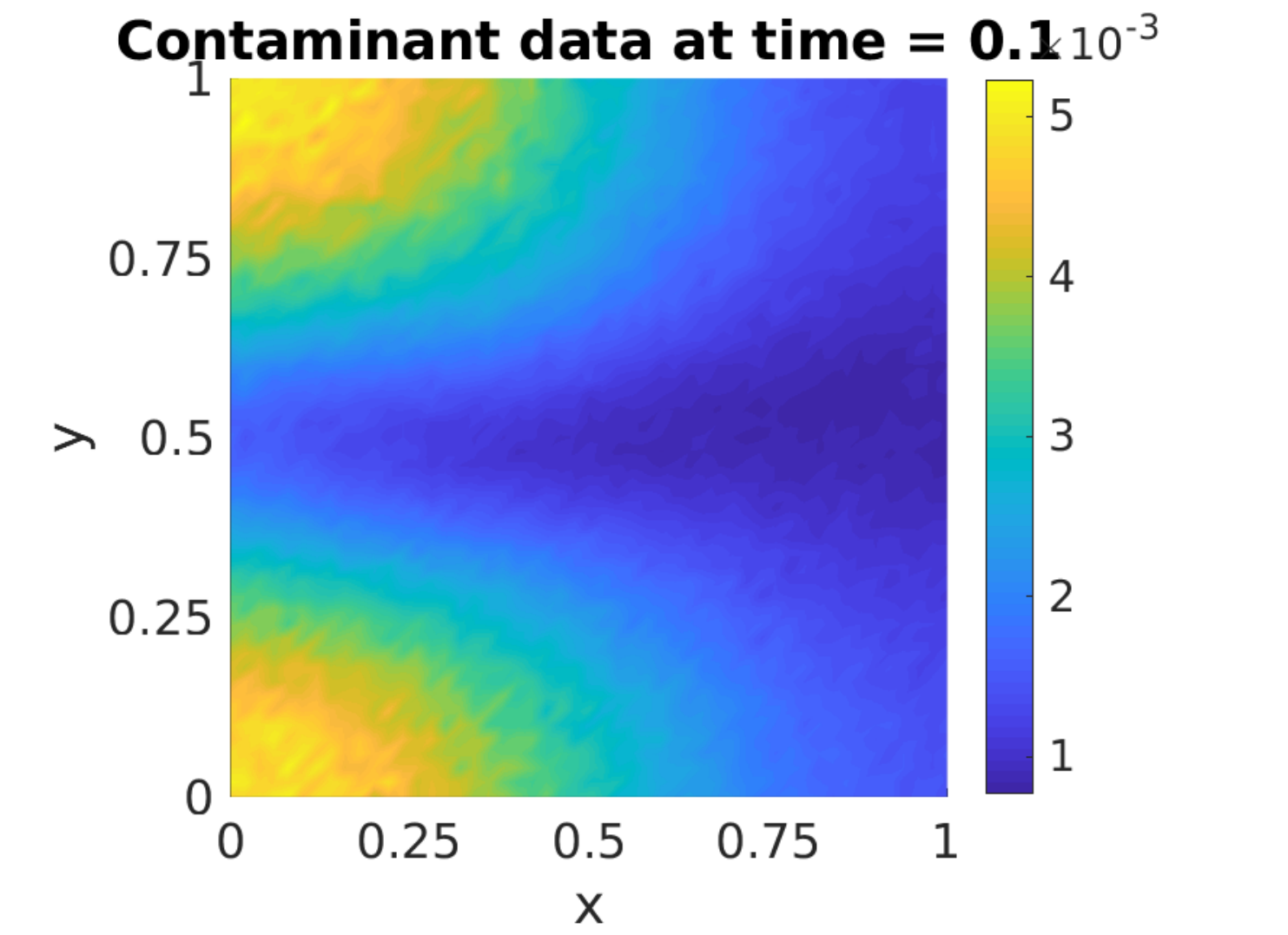}\\
    \includegraphics[width=0.32\textwidth]{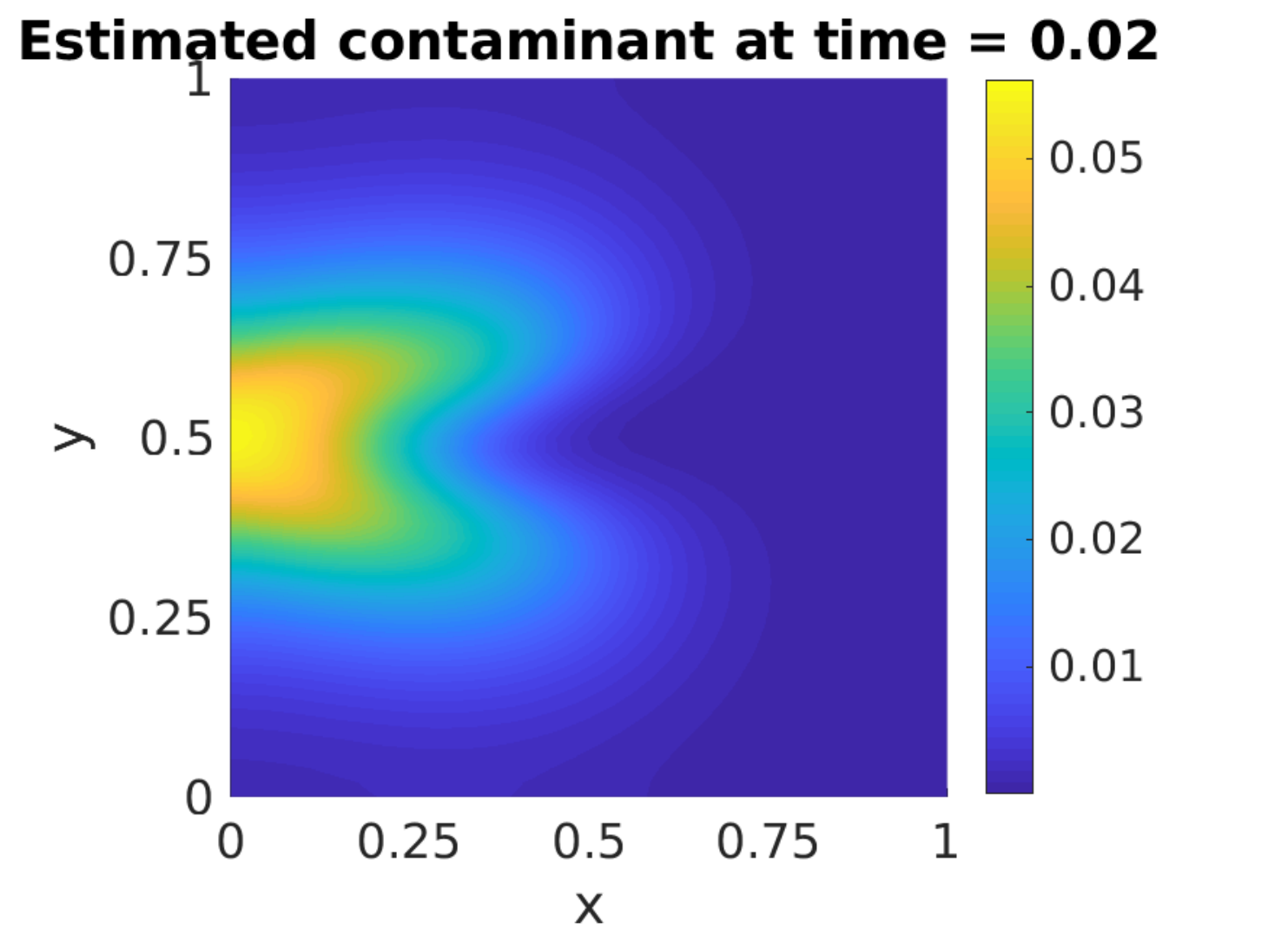}
    \includegraphics[width=0.32\textwidth]{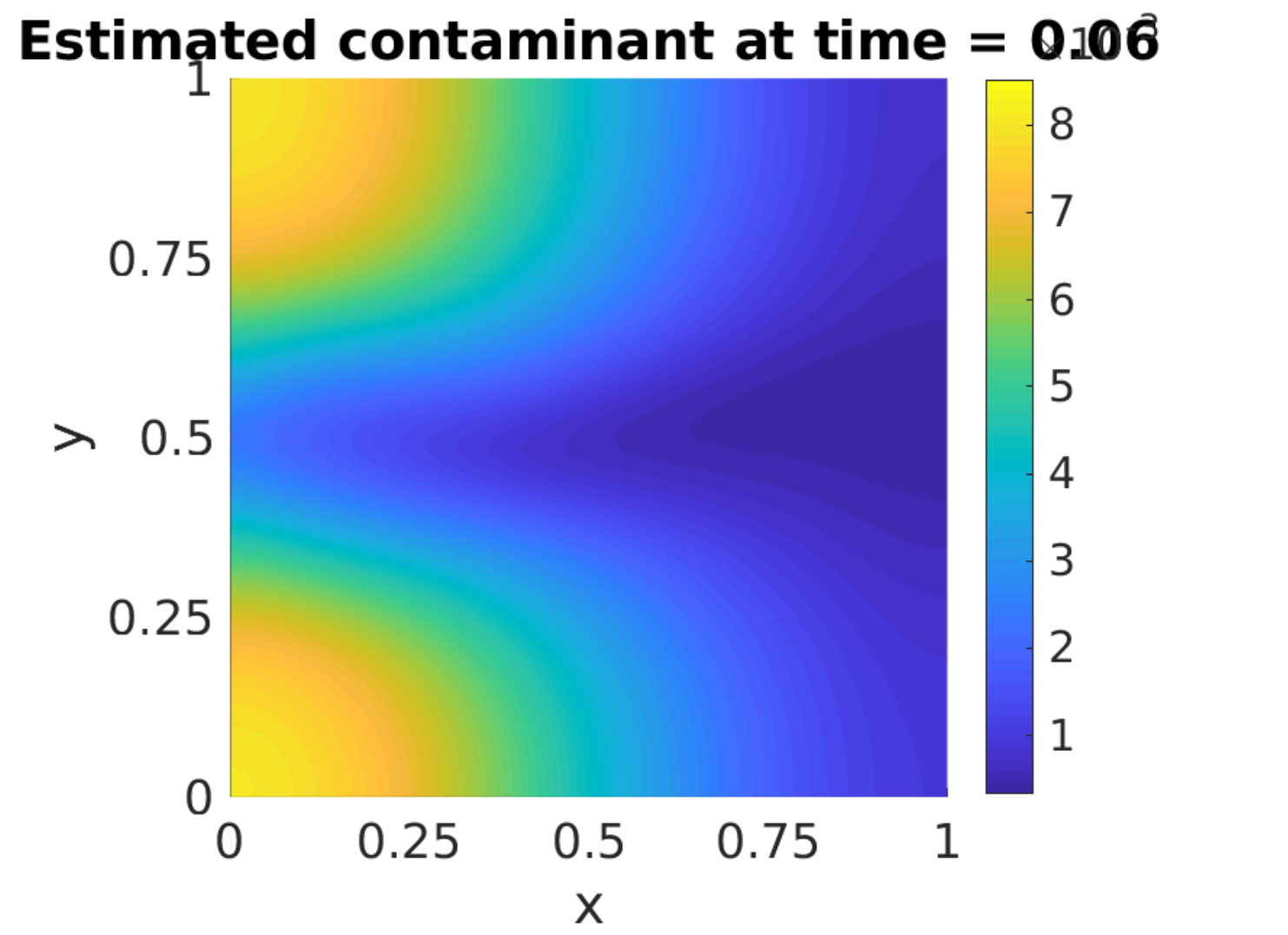}
    \includegraphics[width=0.32\textwidth]{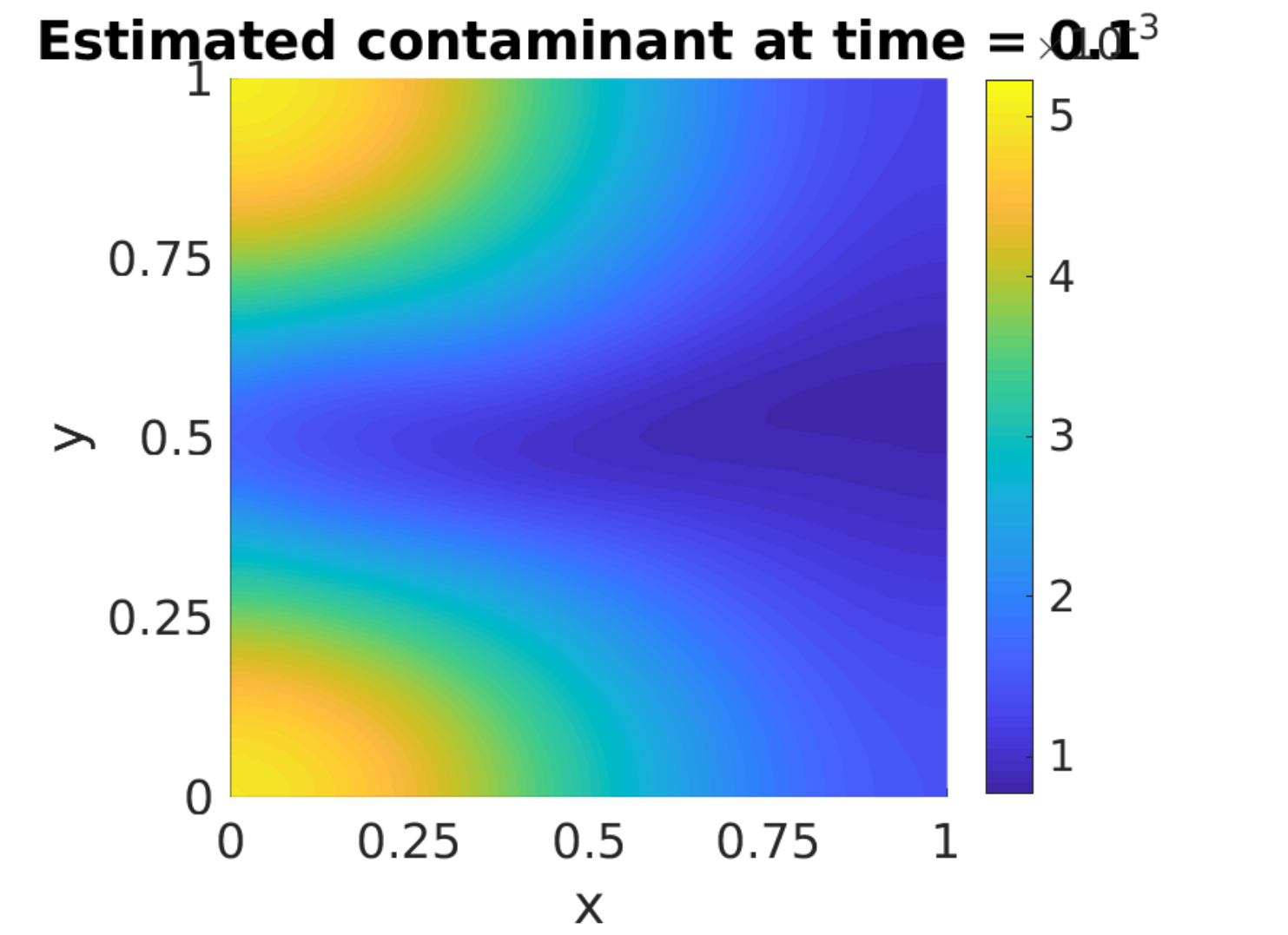}\\
    \caption{Time evolution of the contaminant concentration; from
      left to right, the solutions at times 0.02, 0.06, and 0.10. The
      top row corresponds to the ``true" state polluted by noise; the
      bottom row corresponds to the estimated state using the
      estimated source in the right panel of Figure~\ref{fig:source}.}
    \label{fig:contaminant}
\end{figure}

In this example, we only consider local sensitivity analysis (around
the parameter nominal estimates). In the case of inverse problems, we
interpret the sensitivity indices as the bias created in our source
estimation as a result of potentially misspecifying other parameters
(permeability, boundary conditions, diffusion). The sensitivity
indices indicate where we should invest efforts in characterizing the
parameters in order to ensure a high fidelity source estimation, the
leading singular value provides an error bound on the source
estimation, and the source singular vectors indicate which features of
the estimate will change given the uncertainty in the parameters.

We compute the leading $K=12$ singular triples of \eqref{sen_operator}
with an oversampling factor of $L=8$ is
used. Figure~\ref{fig:source_est_singular_values} shows the leading 12
singular values with an order of magnitude decay from first to last.

\begin{figure}
    \centering
    \includegraphics[width=0.4\textwidth]{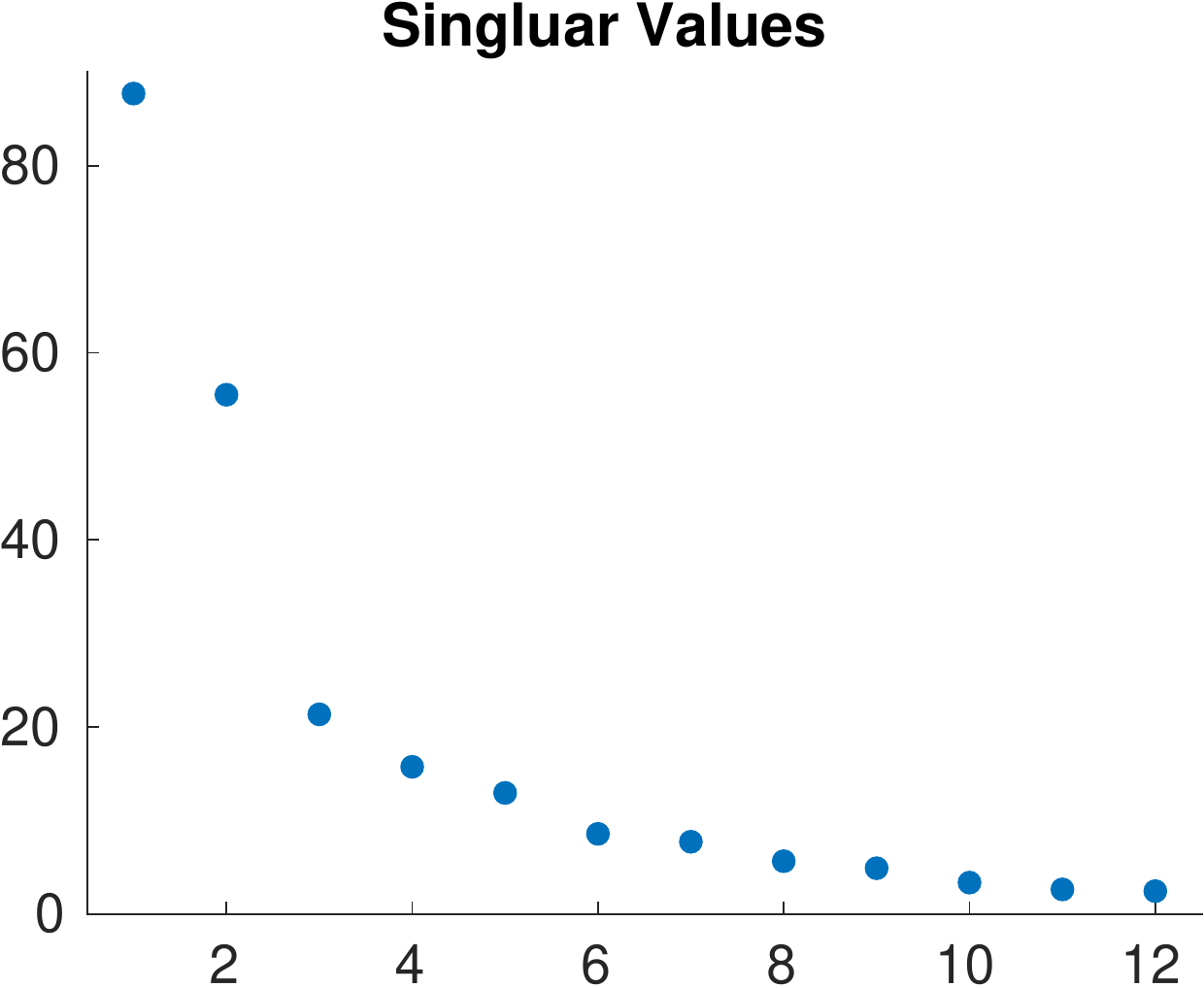}
    \caption{The 12 leading singular values for the source estimation inverse problem sensitivities.}
    \label{fig:source_est_singular_values}
\end{figure}

The sensitivity indices are shown in
Figure~\ref{fig:source_est_sensitivities}. The left, center, and right
panels gives the sensitivities for the parameterization of the
permeability field $\kappa$, boundary condition $\psi$, and diffusion
coefficient $\epsilon$. The center of the basis functions ($\phi$'s
and $\eta$'s) correspond to the spatial location of the sensitivity
indices (dots) in the left and center panels, the diffusion
coefficient is a scalar and hence only one sensitivity index (dot) is
present.
\begin{figure}
    \centering
    \includegraphics[width=0.33\textwidth]{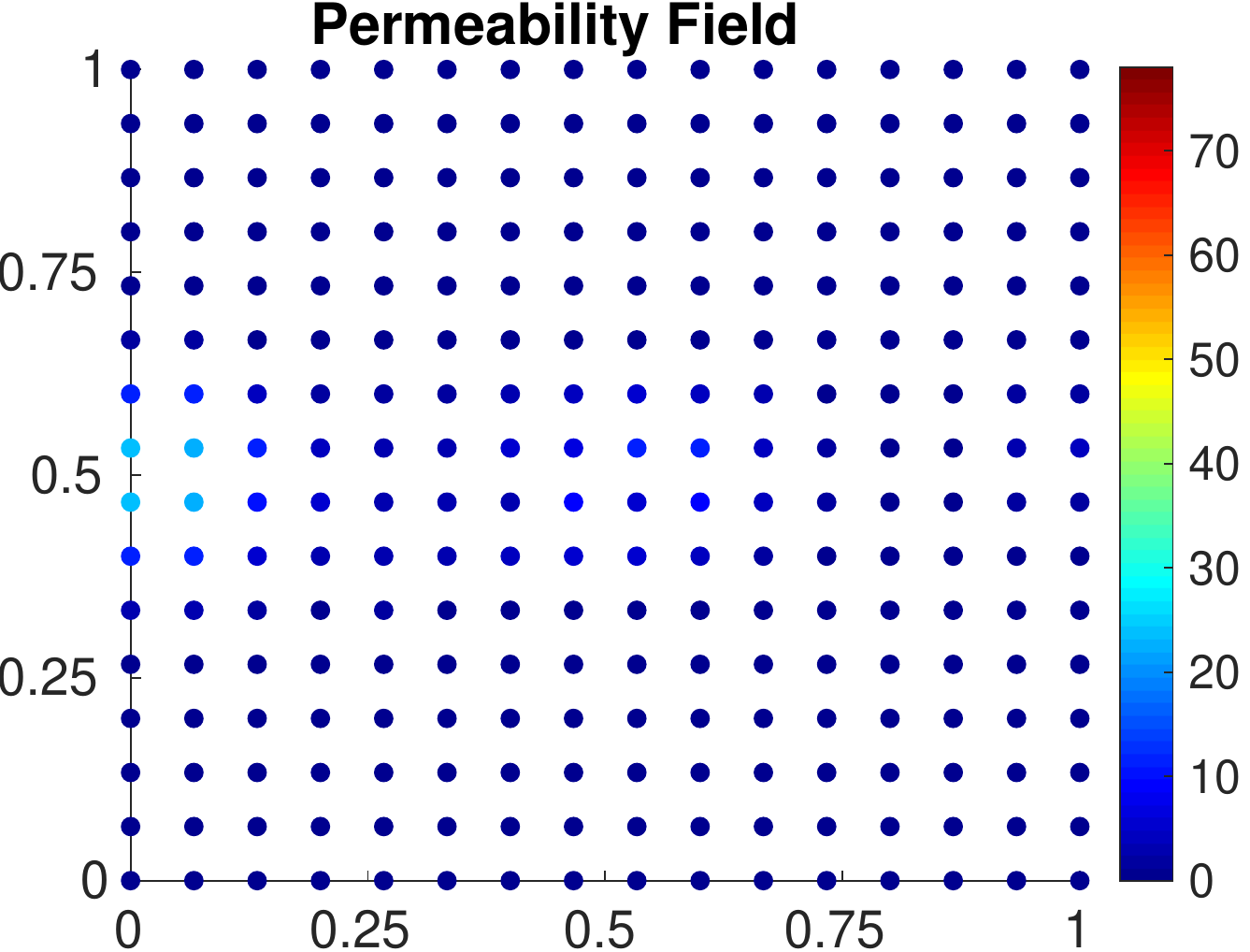}
    \includegraphics[width=0.33\textwidth]{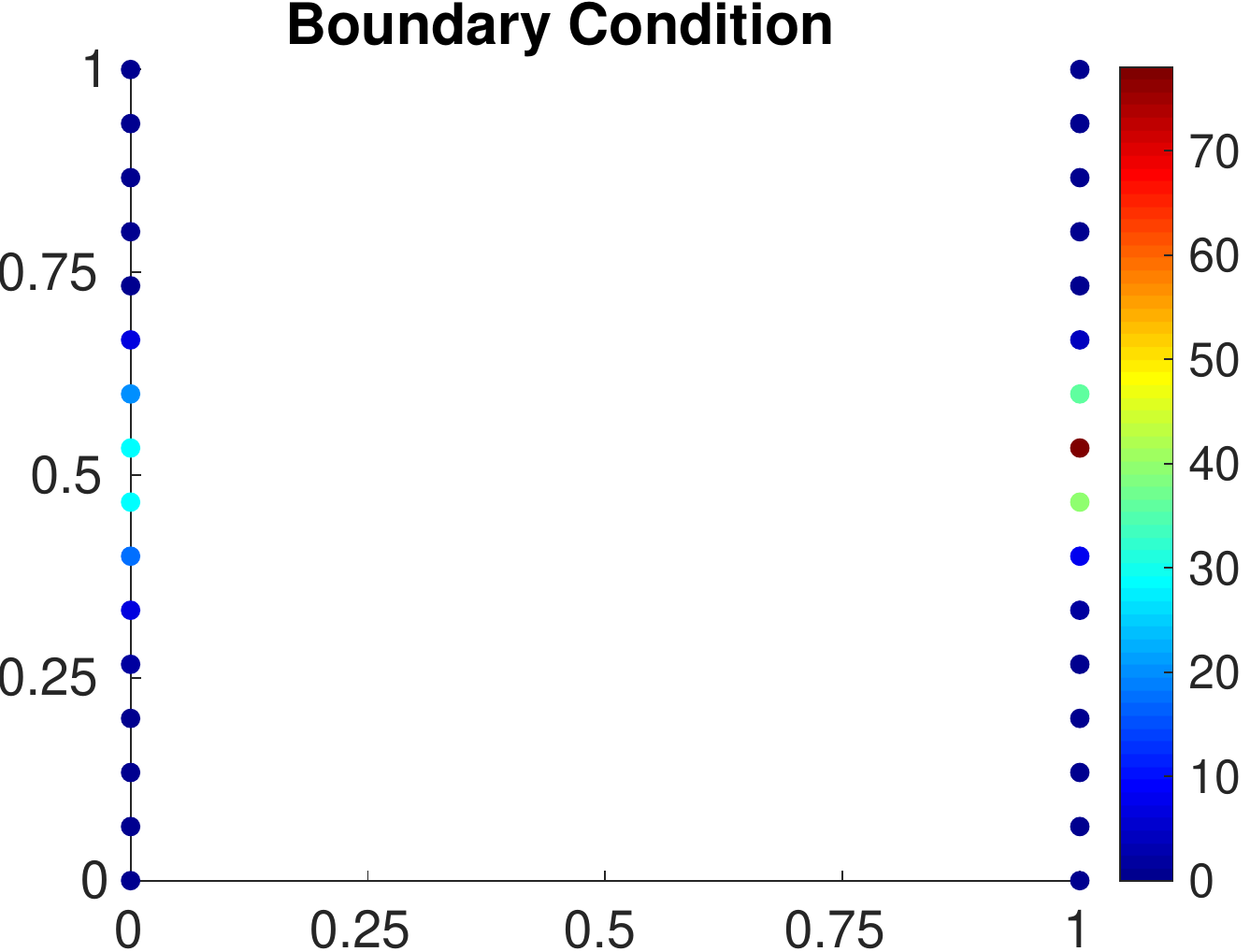}
    \includegraphics[width=0.33\textwidth]{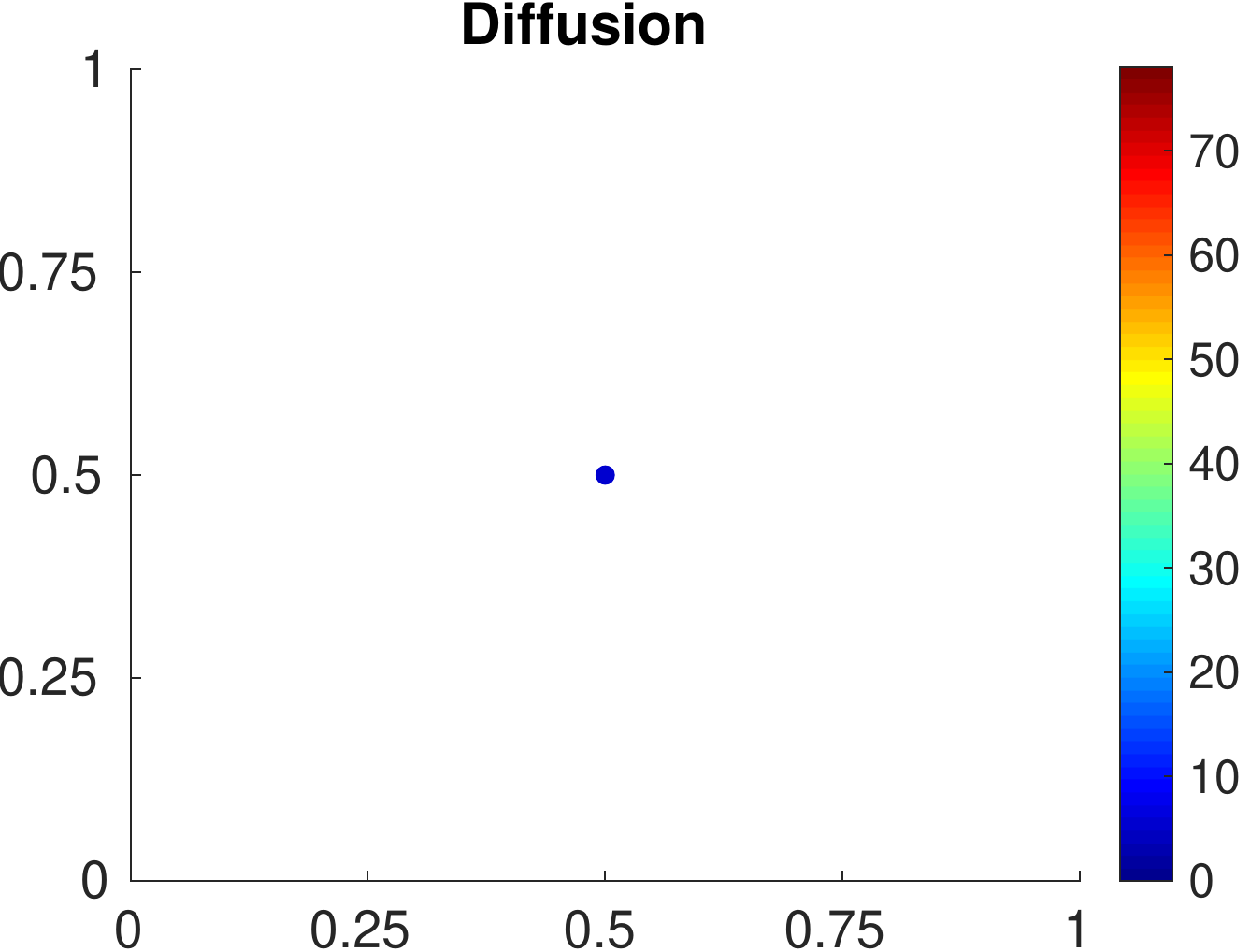}
    \caption{Sensitivity indices for the parameterization of the permeability field (left), pressure equation Dirichlet boundary conditions (center), and diffusion coefficient (right).}
    \label{fig:source_est_sensitivities}
\end{figure}

We see greater sensitivity in localized regions of the Dirichlet boundary conditions around $y=0.5$ (at both $x=0$ and $x=1$) and the permeability field in the region around $x=0$ and $y=0.5$. We
also observe a slight asymmetry which is consistent with the asymmetry
observed in the state solution. The diffusion coefficient sensitivity
is significantly smaller than the high sensitivity regions of the Dirichlet boundary conditions and permeability field.
The set sensitivity indices for the permeability field, left and right Dirichlet boundary conditions, and diffusion coefficient
are given in Table~\ref{tab:set_sensitivities}. 

\begin{table}[!ht]
    \centering
    \begin{tabular}{c|c|c|c|c}
    Set of Parameters& Permeability & Left BC & Right BC & Diffusion\\ \hline
    Set Sensitivity Index& $41.53$ & $40.25$ & $80.05$ & $4.92$ \\
     \end{tabular}
    \caption{Estimated set sensitivity indices for the parameters
      corresponding to the permeability field, left Dirichlet boundary
      condition, right Dirichlet boundary condition, and diffusion
      coefficient.}
    \label{tab:set_sensitivities}
\end{table}

The computation was performed using 32 processors and 32 random
vectors in the eigenvalue solver. This enables the eigenvalue solver
to execute all 32 matrix vector products simultaneously and reduce the
overall execution time to approximately 4 KKT solves.

\section{Conclusion}
\label{sec:conclusion}

We have introduced hyper-differential sensitivity analysis (HDSA)
arguing for a goal-oriented paradigm for sensitivity analysis in the
context of PDE-constrained optimization. We have shown through an
analytic example that HDSA is different from traditional sensitivity
analysis.  The complication of this approach is that directional
derivatives are required through the optimality conditions, which
includes partial differential equations as constraints.  The high
computational requirements are addressed through efficient
adjoint-based methods for the inner optimization problem and
randomized generalized eigenvalue solvers to estimate the sensitivity
indices. We explore three levels of parallelism including the
underlying linear algebra constructs, parallel randomized algorithms,
and global sensitivity sampling procedures.  Two numerical examples
demonstrate the flexibility of HDSA in that uncertainties in both
control and inverse problems can be addressed, in addition to
demonstrating the extensibility to different sets of PDEs (steady
state and transient).  HDSA provides a computational tractable
approach to prioritize large numbers of uncertain parameters relevant
to the optimal solution of PDE-constrained optimization problems.  A
global sensitivity strategy has been developed with intuition through
a theoretical bound that depends on the underlying nonlinearity.  The
numerical examples demonstrate the challenge managing uncertainties in
an optimization problem and how HDSA provides critical insight that
can guide subsequent analysis such as field measurements, laboratory
experiments, physics development, and robust optimization.

HDSA facilitates new exploration by asking and answering questions
that augment other UQ methodologies in the service of optimal design
and decision making. A particular focus is given to multi-physics
applications and the computational efficiency needed to explore high
dimensional parameter spaces which contain spatial and/or temporally
dependent parameters corresponding to a variety of physical
quantities. The abstraction and generality of the proposed method
permits various extensions and applications. For instance, we may
consider parameters in the objective function such as weights, data,
or user specified algorithmic parameters. The parallelism of our
method, matrix free software design, and underlying Trilinos
constructs facilitates computationally scalability. By combining
mathematical abstraction, efficient software infrastructure, and
exploitation of low rank structure (when present), HDSA enables us to
explore a variety of UQ questions in the context of optimization
constrained by PDEs.

\acknowledgements

The authors are grateful to Arvind Saibaba for helpful discussions
facilitating this work. This paper describes objective technical
results and analysis. Any subjective views or opinions that might be
expressed in the paper do not necessarily represent the views of the
U.S. Department of Energy or the United States Government. Sandia
National Laboratories is a multimission laboratory managed and
operated by National Technology and Engineering Solutions of Sandia
LLC, a wholly owned subsidiary of Honeywell International, Inc., for
the U.S. Department of Energy's National Nuclear Security
Administration under contract DE-NA-0003525. SAND2019-10626 J.

\bibliographystyle{IJ4UQ_Bibliography_Style}

\bibliography{dasco}
\end{document}